\newcommand{\ov}{\overline}
\newcommand{\R}{\mathbb{R}}
\newcommand{\1}{\mathbbm{1}}
\newcommand{\mX}{\mathcal X}
\newcommand{\beq}{\begin{equation}}
\newcommand{\eeq}{\end{equation}}
\def\a{\alpha}
\def\d{\delta}
\def\g{\gamma}
\def\l{\lambda}
\def\m{\mu}
\def\s{\sigma}
\def\o{\omega}
\def\O{\Omega}
\def\gS{\Sigma}
\def\e{\varepsilon}
\def\half{\frac{1}{2}}
\newcommand{\cF}{{\cal F}}
\newcommand{\cG}{{\cal G}}
\newcommand{\cH}{{\cal H}}
\newcommand{\cI}{{\cal I}}
\newcommand{\cN}{{\cal N}}
\newcommand{\cL}{{\cal L}}
\newcommand{\cS}{{\cal S}}
\newcommand{\diver}{{\rm div}}
\newtheorem{theorem}{Theorem}[section]
\newtheorem{definition}[theorem]{Definition}
\newtheorem{proposition}[theorem]{Proposition}
\newtheorem{remark}[theorem]{Remark}
\numberwithin{equation}{section}
\begin{document}
\title{A system of of Hamilton-Jacobi equations characterizing     geodesic centroidal   tessellations}

\author{Fabio Camilli\footnotemark \and  Adriano Festa\footnotemark[2]}
\date{\today}
\maketitle
\footnotetext[1]{Dip. di Scienze di Base e Applicate per l'Ingegneria,   Sapienza  Universit{\`a}  di Roma, via Scarpa 16, 00161 Roma  ({\tt  fabio.camilli@uniroma1.it})}
\footnotetext[2]{DISMA - Dipartimento di Scienze Matematiche ``Giuseppe Luigi Lagrange", Corso Duca degli Abruzzi, 24, 10129 Torino ({\tt  adriano.festa@polito.it})}
\footnotetext[3]{The present research was partially supported by MIUR grant ``Dipartimenti Eccellenza 2018-2022" CUP: E11G18000350001, DISMA, Politecnico di Torino.}
\begin{abstract}  We introduce a  class of systems of Hamilton-Jacobi equations   characterizing  geodesic centroidal tessellations, i.e. tessellations  of domains with respect to geodesic distances where generators and centroids coincide. Typical examples are given by  geodesic centroidal Voronoi tessellations and geodesic centroidal power diagrams. An appropriate version of the Fast Marching method on unstructured grids  allows computing the solution of the Hamilton-Jacobi system and therefore   the associated tessellations. We propose various numerical examples to illustrate the features of the technique. 
  \end{abstract}

\noindent
{\footnotesize \textbf{AMS-Subject Classification:} 65K10, 49M05, 65D99, 35F21, 49N70}.\\
{\footnotesize \textbf{Keywords:} geodesic distance; Voronoi tessellation;  $K$-means; power diagram;  Hamilton-Jacobi equation; Mean Field Games; Fast Marching method}.

\section{Introduction}
A partition, or tessellation, of a set $\Omega$   is a collection of mutually disjoint subsets $\Omega_k\subset \Omega$, $k=1,\dots,K$, such that $\cup_{k=1}^K   \Omega_k=\Omega$. A classical model is the Voronoi tessellation and, in this case, the sets $\Omega_k$ are called Voronoi diagrams.  Tessellations and other similar families of geometric objects arise in several applications, ranging from graphic design, astronomy, clustering, geometric modelling,  data analysis, resource optimization, quadrature formulas, and discrete integration, sensor networks, numerical methods for partial differential equations (see \cite{akl,okabe}).\\
Partitions and tessellations are frequently associated with objective functionals, defining desired additional properties to be satisfied. A well-known example is the $K$-means problem in cluster analysis, which aims to subdivide a data set into $K$ clusters such that each data point belongs to the cluster with the nearest cluster center. Minima of the $K$-means functional result in a partitioning of the data  in centroidal Voronoi diagrams, i.e., Voronoi diagrams for which generators and centroids coincide (see \cite{dfg_rev}).  In other applications, the size of the cells is prescribed (capacity-constrained problem), and the  partition of $\Omega$  is given by another generalization of Voronoi diagrams, called power diagrams (\cite{aha,bourne}).\\
Algorithms for the computation of centroidal Voronoi tessellations in the Euclidean case, such as the Lloyd's algorithm, exploit  geometric properties of the problem to rapidly converge to a solution. 
The case of  geodesic Voronoi tessellation, i.e. tessellation with respect to a general convex metric,   presents additional difficulties both in the computation of Voronoi diagrams and in that of the corresponding centroids.\\
In this work, we introduce a  PDE method for  the computation of the geodesic Voronoi tessellation.  Given a    density function $\rho$ supported in a bounded set $\Omega$, representing the distribution of a data set, the aim is to subdivide the point in $K$ clusters defined by  a convex metric $d_C$.  As a prototipe of the approach, shown in its simplest form, we introduce a system of first-order Hamilton-Jacobi (HJ in short) equations that, for the Euclidean distance, reads as 
\begin{equation}\label{HJ}
\left\{
\begin{array}{ll}
|Du_k| =   1,\quad  x\in\R^d,\, k=1,\dots,K \\[6pt]
u_k( \m_k)=0,\\[6pt]
S_u^k=  \{x\in\R^d:\,u_k(x)=\min_{j=1,\dots,K} u_{j}(x)\}, \\[6pt]
\mu_{k}=\frac{\int_{S_u^k }x \rho(x)dx}{\int_{S_u^k}  (x)\rho(x)dx}.
\end{array}
\right.
\end{equation}
We show that the family $\{S_u^k\}_{k=1}^K$  defined  by \eqref{HJ} corresponds to a critical point of the $K$-means functional, hence to a centroidal Voronoi tessellation of $\Omega$ with centroids $\m_k$; vice versa, to each critical point  of the functional corresponds to a solution $u=(u_1,\dots,u_K)$ of the previous system. Moreover, a  system of HJ equations similar to \eqref{HJ} provides a way to compute the optimal weights for the capacity-constrained problem, which aims to find a geodesic centroidal tessellation of the domain with regions of a given area. This problem arises in several applications in economy, and it is connected with the so-called semi-discrete Optimal Transport problem (\cite{levy,m}).\\
It is well known that the hard clustering $K$-means problem can be seen as the limit of the soft clustering Gaussian mixture model  when the variance parameter   goes to $0$ (see \cite{bishop}).
Relying on this observation, we provide an interpretation of   system \eqref{HJ}   as the vanishing viscosity limit of a multi-population Mean Field Games (MFG in short) system introduced in \cite{accd} to characterize the   parameters of a  mixture model maximizing a log-likelihood functional.\\
To solve the system \eqref{HJ} we consider an iterative method similar to the LLoyd's algorithm. At each step, given the generators of the tessellation computed in the previous step, we compute  the Voronoi diagrams  solving the HJ equation via a Fast Marching technique. Then, we compute the new generators and we iterate. As we discuss later, smart management of the data and the use of acceleration techniques may considerably speed up the process.\\
PDE theory is  a robust framework to solve classic (and less traditional) tessellation problems. The main advantage of this approach is the high adaptability of the framework to specific variations of the problem (presence of constraints, non-conventional distance functions, etc.). This increased adaptability comes with a precise cost: a PDE approach is more computationally demanding than other methods available in the literature. However, the recent developments of numerical methods for nonlinear PDEs, and the increment of the accessibility to more powerful computational resources at any level, make these techniques progressively more appealing in many applicative contexts \cite{Festa16, Saluzzi19,Kalise18}. \par
The paper is organized as follows. In Section \ref{sec:K_means}, we introduce a  HJ system approach to the hard-clustering problem  and   geodesic centroidal Voronoi tessellations. In Section \ref{sec:power_diagrams}, we consider a system of HJ equations to characterize centroidal power diagrams, a generalization of centroidal Voronoi tessellations where the measure of the cells is prescribed.
In Section \ref{sec:MFG_inter}, we provide an interpretation of the HJ system in terms of MFG theory. In Section \ref{sec:numerics}, we discuss the numerical approximation of the HJ systems introduced in the previous sections and we provide several examples.

\section{Geodesic Voronoi tessellations  and HJ equations}\label{sec:K_means}
In this section, we introduce a class of geodesic distance, the corresponding $K$-means problem and its characterization via a system of Hamilton-Jacobi equations. Consider a set-valued map $x\mapsto C(x)\subset \R^d$ and assume that
 \begin{itemize}
 	\item[(i)]   for each $x\in\R^d$,   $C(x)$ is a compact, convex set and $0\in C(x)$;
 	\item[(ii)] there exists $L>0$ such that  $d_{\cH}(C(x),C(y))\le L|x-y|$,  for all $x,y\in\R^d$;	
 	\item[(iii)] there exists $\d>0$ such $B(0,\d)\subset C(x)$ for any $x\in\R^d$,
 \end{itemize}
where $d_{\cH}$ denotes the Hausdorff distance.
 For $x,y\in\R^d$, let $\cF_{x,y}$ be the set of all the trajectories $X(\cdot)$ defined by the differential inclusion
 \[\dot X(t)\in C(X(t)), \, X(0)=x,\,X(T)=y,\]
 for some $T=T(X(\cdot))>0$. Note that, because  of the assumptions on the map $C(x)$,  $\cF_{x,y}$ is not empty.
 The   function $d_C:\R^d\times\R^d\to\R$, defined  by
 \begin{equation}\label{eq:gendist_dist}
 	d_C(x,y)=\inf_{\cF_{x,y}} T(X(\cdot)),
 \end{equation}
 is a distance function, equivalent to the Euclidean distance (see \cite{cdi}). Some   examples of distance $d_C$ are provided at the end of this section, see Remark \ref{rem:ex_dist}.\\
We introduce the $K$-means problem for the geodesic distance $d_C$. Let $\Omega$ be a bounded subset of $\R^d$ and  $\rho$  a density function supported in $\Omega$, i.e. $\rho\ge 0$ and $\int_{\Omega} \rho dx=1$,    representing the distribution of the points of a  given data set $\mX$.  The $K$-means problem for the distance  $d_C$ aims to minimize the functional 
\begin{equation}\label{eq:gendist_functional}
	\begin{split}
	&\cI_{C}(y_1,\dots,y_k)=\sum_{k=1}^K\int_{V(y_k)}d_C(x,y_k)^2\rho(x)dx,\\
	&\text{where}\quad V(y_k)=\{x\in\R^d:d_C( y_k,x)=\min_{j=1,\dots,K}d_C( y_j,x)\}.
	\end{split}
\end{equation} 
A  minimum of the functional $\cI_{C}$  provides a clusterization of the data set, i.e., a repartition of   $\mX$ into $K$ disjoint clusters $V(y_k)$ such that each data point belongs to the cluster with the smaller distance from centroid  $y_k$.  This property can be expressed in the elegant terminology of the \textit{geodesic centroidal Voronoi tessellations} (see \cite{dfg_rev,dfg_sinum,lylh}).
Given  a set of generators  $\{y_k\}_{k=1}^K$, $y_k\in\ov\Omega$, we define a geodesic Voronoi tessellation   of $\Omega$   as the union of the geodesic Voronoi diagrams
\begin{equation}\label{eq:gendist_GVD}
	V(y_k)=\{x\in\Omega: \,d_C(x,y_k)=\min_{j=1,\dots,K} d_C(x,y_j)\}
\end{equation}
(a point of $V(y_k)\cap V(y_j)$ is assigned to the diagram with the smaller index).
\begin{definition}
	A geodesic Voronoi tessellation $\{V(y_k)\}_{k=1}^K$ of $\Omega$ is said to be a geodesic centroidal Voronoi tessellation (GCVT in short)  if, for each  $k=1,\dots,K$, the generator $y_k$ of $V(y_k)$ coincides with the centroid of $V(y_k)$, i.e.
	\begin{equation}\label{eq:gcvt_def}	
		\int_{V(y_k)}\rho(x)d_C(y_k,x)dx=\min_{z\in V(y_k)}\int_{V(y_k)}\rho(x)d_C(z,x)dx.	
	\end{equation}
\end{definition}
\begin{remark}
	If    $C(x)=B(0,1)$ for each $x\in\R^d$, then  $d_C$ coincides with   the Euclidean distance and \eqref{eq:gendist_functional} is the classical $K$-means problem (see \cite{dfg_sinum}). In this case, $\{V(y_k)\}_k$ is called a centroidal Voronoi tessellation (CVT in short) and the 
	centroids are given by
	\begin{equation}\label{centroids}
		y_k=\frac{\int_{V(y_k)} s\rho(s)ds}{\int_{V(y_k)}  \rho(s)ds}.
	\end{equation}
\end{remark}
Since $\Omega$ is bounded and   $\cI_{C}$ is continuous, a global minimum   of the functional \eqref{eq:gendist_functional} exists; but, since $\cI_{C}$ is in general non convex, local minimums  may also exist. In \cite[Thereom 1]{lylh}, it is proved that the previous functional is continuous and
\begin{equation}\label{eq:intro_equiv_CVT}
	\text{critical points of   $\cI_{C}$    correspond to   GCVTs of $\Omega$.}
\end{equation}  
Critical points of $\cI_{C}$ can be computed via the Lloyd algorithm, a simple two steps iterative procedure. Starting from an arbitrary initial set of generators, at each iteration the following two steps are performed
\begin{itemize}
	\item Given the set of generator $\{y_i\}_{i=1}^K$ at the previous step, construct the Geodesic Voronoi tessellation $\{V(y_i)\}_{i=1}^K$ as in \eqref{eq:gendist_GVD};
	\item take the centroids of  $\{V(y_i)\}_{i=1}^K$ as the new set of generators and iterate.
\end{itemize}
The procedure is repeated until an appropriate stopping criterion is met. At each iteration, the objective function $\cI_{C}$ decreases and the algorithm converges to a (local) minimum of  \eqref{eq:gendist_functional} (see \cite[Theorem 2.3]{dfg_sinum} in the Euclidean case and \cite{lylh} in the general case).\par 
In order to introduce a PDE characterization of GCVT, we associate to the distance $d_C$ a Hamiltonian  $H:\R^d\times\R^d\to \R$  defined  as the support function  of the convex set $C$, i.e. 
\begin{equation*}
	H(x,p)=\sup_{q\in C(x)}p\cdot q.
\end{equation*}
Then  $H:\R^d\times\R^d\to\R$ is a continuous function and satisfies the following properties
\begin{itemize}
\item[(i)]	$H(x,0)=0$, $H(x,p)\ge 0$ for $p\in\R^d$;
\item[(ii)]	$H(x,p)$ is convex and positive homogeneous in $p$,
	i.e. for  $\l>0$, $H(x,\l p)=\l H(x,p)$;
\item[(iii)]	$ |H(x,p)-H(y,p)|\le L|x-y|(1+|p|)$ for $x,y\in\R^d$.
\end{itemize}
Moreover, for any $y\in\R^d$, the function $u:\R^d\to \R$, defined by $u(x)=d_C(y,x)$, is the unique viscosity solution (see \cite{bcd} for the definition) of the problem
\begin{equation}\label{eq:gendist_HJ}
	\left\{
	\begin{array}{ll}
		H(x,Du)= 1 ,\quad  x\in\R^d, \\[6pt]
		u(y)=0.\\[6pt]
	\end{array}
	\right.
\end{equation}
 \\
We characterize  GCVTs of $\Omega$ via  the following system of HJ equations
\begin{equation}\label{eq:gendist_MFG}
	\left\{
	\begin{array}{ll}
		H(x,Du_k)=1 , \\[6pt]
		u_k( \m_k)=0,\\[6pt]
		S_u^k=  \{x\in\R^d:\,u_k(x)=\min_{j=1,\dots,K} u_{j}(x)\},  \\[6pt]
		\int_{S^k_u}\rho(x)u_k(x)dx=\min\{\int_{S^k_u}\rho(x)u_y(x)dx: \text{$u_y$ solution of \eqref{eq:gendist_HJ} with $y\in S^k_u$}\}.
	\end{array}
	\right.
\end{equation}
for  $k=1,\dots, K$.\\ 
Recall that   the unique solution of \eqref{eq:gendist_HJ} is given by  $u(x)=d_C( y,x)$, hence 
$u_k(x)=d_C(\mu_k,x)$. Furthermore, the last condition in \eqref{eq:gendist_MFG}, see also \eqref{eq:gcvt_def}, implies  that the points $\mu_k$ are the centroids of the sets $ S^k_u$ with respect to the metric $d_C$. On the other hand,   the HJ equations   are coupled via the points $\mu_1,\dots,\mu_k$ which are  the centroids of the  sets   $S^k_u$, $k=1,\dots,K$ and therefore they are unknown. Indeed, the true unknowns in system \eqref{eq:gendist_MFG} are the  points $\mu_k$, $k=1,\dots,K$, since   they determine the functions $u_k$ as viscosity solutions of the corresponding HJ equations and consequently the diagrams $S^k_u$. \\
We now show that the  previous system characterizes   critical points of the functional \eqref{eq:gendist_functional} or, equivalently, GCVTs of the set $\Omega$.
\begin{proposition}\label{prop:gendist_equivalence}
	The following conditions are equivalent:
	\begin{itemize}
		\item[(i)] Let $(y_1,\dots,y_K)$ be a critical point of the functional $\cI_{C}$ in \eqref{eq:gendist_functional} with geodesic Voronoi diagrams $V(y_k)$. Then, there exists a solution of \eqref{eq:gendist_MFG} such that $\m_k=y_k$ and $S_u^k=V(y_k)$.
		\item[(ii)] Given a solution $u=(u_1,\dots,u_K)$ of \eqref{eq:gendist_MFG}, then  $(\m_1,\dots,\m_K)$ is a critical point of $\cI_{C}$ with geodesic Voronoi diagrams $V(y_k)=S_u^k$.
	\end{itemize}
\end{proposition}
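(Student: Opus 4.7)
The plan is to translate between the two formulations by leveraging two facts already recorded in the text: first, by \eqref{eq:intro_equiv_CVT}, critical points of $\cI_{C}$ are exactly the generators of GCVTs of $\Omega$ (in the sense of \eqref{eq:gcvt_def}); second, for every fixed $y\in\R^d$, the scalar cell problem \eqref{eq:gendist_HJ} admits the unique viscosity solution $u(x)=d_C(y,x)$. Together these two facts reduce the equivalence of (i) and (ii) to a line-by-line matching between \eqref{eq:gcvt_def} and the last condition of system \eqref{eq:gendist_MFG}; no new analysis is really required, only bookkeeping.

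For the forward implication (i), starting from a critical point $(y_1,\dots,y_K)$, I would set $\mu_k:=y_k$ and $u_k(x):=d_C(\mu_k,x)$, and then verify the four lines of \eqref{eq:gendist_MFG} in order: the first two lines are the second fact above; the third line gives $S_u^k=V(y_k)$, since $u_j(x)=d_C(y_j,x)$; and the fourth line collapses to the centroidal identity \eqref{eq:gcvt_def}, which is exactly the GCVT property granted by \eqref{eq:intro_equiv_CVT}. For (ii), given a solution $u=(u_1,\dots,u_K)$, the uniqueness part of the second fact forces $u_k(x)=d_C(\mu_k,x)$, so $S_u^k=V(\mu_k)$ and the fourth line of \eqref{eq:gendist_MFG} becomes \eqref{eq:gcvt_def} for the generators $\mu_k$. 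Thus $\{V(\mu_k)\}_k$ is a GCVT, and invoking \eqref{eq:intro_equiv_CVT} in the reverse direction identifies $(\mu_1,\dots,\mu_K)$ as a critical point of $\cI_{C}$.

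The only subtle point is the meaning of ``critical point'' of $\cI_{C}$: the dependence of $V(y_k)$ on $y_k$ is continuous but not classically differentiable, so I would not attempt to re-prove \eqref{eq:intro_equiv_CVT} here and simply quote \cite{lylh}. A second, minor technicality is that $\partial V(y_k)\cap\partial V(y_j)$ may be non-empty and is only assigned to one diagram by an indexing convention; however these boundary sets are Lebesgue-negligible, so the integrals in \eqref{eq:gendist_functional}, \eqref{eq:gcvt_def} and the fourth line of \eqref{eq:gendist_MFG} are unaffected and the convention in \eqref{eq:gendist_GVD} is harmless. Modulo this, the argument is a direct substitution and I expect the write-up to be short.
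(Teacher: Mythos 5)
Your proposal is correct and follows essentially the same route as the paper: both directions proceed by the substitution $u_k(x)=d_C(\mu_k,x)$, $\mu_k=y_k$, using the uniqueness of viscosity solutions of \eqref{eq:gendist_HJ} to identify $S_u^k$ with $V(y_k)$ and matching the last line of \eqref{eq:gendist_MFG} with the centroid condition \eqref{eq:gcvt_def}, then quoting \eqref{eq:intro_equiv_CVT} from \cite{lylh}. Your added remark on the Lebesgue-negligibility of the cell boundaries is a harmless refinement the paper leaves implicit.
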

\begin{proof}
	Assume that $(y_1,\dots,y_K)$ is a critical point of the functional $\cI_{C}$, hence  $V(y_k)$ defined as in \eqref{eq:gendist_GVD} is a GCVT and
	\begin{equation}\label{eq:intro_proof1}
		\int_{V(y_k)}\rho(x)d_C(y_k,x)dx=\min_{z\in V(y_k)}\int_{V(y_k)}\rho(x)d_C(z,x)dx, \qquad \forall k=1, \dots, K.
	\end{equation}
	 Define  $u=(u_1,\dots,u_K)$, $\m=(\mu_1,\dots, \mu_k)$  by
	\begin{equation}\label{eq:intro_proof2}
		u_{k}(x)=  d_C(y_k,x), \qquad \mu_k=y_k, \quad k=1,\dots,K.
	\end{equation}
	Then $u=(u_1,\dots,u_K)$ is a solution of the HJ equations in \eqref{eq:gendist_MFG} with $\m_k=y_k$. Moreover, by \eqref{eq:gendist_functional}, we have that $S_u^k=V(y_k)$
	and therefore \eqref{eq:intro_proof1}	 is equivalent to
\begin{align*}
	 \int_{S^k_u}\rho(x)u_k(x)dx=\min\{\int_{S^k_u}\rho(x)u_z(x)dx: \text{$u_z$ solution of \eqref{eq:gendist_HJ} with $z\in S^k_u$}\}.
\end{align*}
	We conclude that   $u=(u_1,\dots,u_K)$  and $\m=(\mu_1,\dots, \mu_k)$ in \eqref{eq:intro_proof2} give  a solution of \eqref{eq:gendist_MFG}.\par 
	Now assume that $u=(u_1,\dots,u_K)$, $\m=(\mu_1,\dots, \mu_k)$ is a solution of \eqref{eq:gendist_MFG} and set $y_k=\mu_k$, $k=1,\dots, K$. Then, defined $V(y_k)$ as in \eqref{eq:gendist_functional}, we have
	$V(y_k)=S_u^k$. Moreover, taking into account that $u_{k}(x)= d_C(\mu_k,x)$ and $\mu_k$ are characterized by
	$$\int_{S^k_u}\rho(x)u_k(x)dx=\min\{\int_{S^k_u}\rho(x)u_y(x)dx: \text{$u_y$ solution of \eqref{eq:gendist_HJ} with $y\in S^k_u$}\}
	$$
	 we also have that $y_k$ satisfies \eqref{eq:gcvt_def}. Therefore $V_k$, $k=1,\dots, K$, is a GCVT and, by \eqref{eq:intro_equiv_CVT}, $y_k$, $k=1,\dots, K$, a minimum of $\cI_{C}$.
\end{proof}
The previous result can be restated in the terminology of the Voronoi tessellation, saying that a solution of the   system \eqref{eq:gendist_MFG} determine a GCVT  and vice versa.
We have the following existence result for \eqref{eq:gendist_MFG}.
\begin{theorem}
	Let $\rho$ be a positive and smooth density function defined on a smooth bounded set $\Omega$. Then, there exists a solution to \eqref{eq:gendist_MFG}. 
	Moreover, any limit point   of the Lloyd algorithm  corresponds to  a solution of the HJ system.
\end{theorem}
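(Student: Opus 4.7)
The plan is to route everything through Proposition \ref{prop:gendist_equivalence}, which converts the problem of solving \eqref{eq:gendist_MFG} into the problem of finding critical points of the $K$-means functional $\cI_{C}$. For existence, I would observe that $\cI_{C}$ is continuous on $\overline{\Omega}^K$ (this is asserted in \cite{lylh} and ultimately follows from the continuity of the geodesic distance $d_C$ together with positivity/smoothness of $\rho$). Since $\overline{\Omega}$ is compact, $\overline{\Omega}^K$ is compact as well, and therefore $\cI_{C}$ attains a global minimum at some $(y_1^*,\dots,y_K^*)\in\overline{\Omega}^K$. A global minimum is in particular a critical point, so by part (i) of Proposition \ref{prop:gendist_equivalence} the tuple $(y_1^*,\dots,y_K^*)$, together with the associated value functions $u_k(x)=d_C(y_k^*,x)$, yields a solution of \eqref{eq:gendist_MFG}.

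For the Lloyd statement, let $\{y^{(n)}\}=\{(y_1^{(n)},\dots,y_K^{(n)})\}\subset\overline{\Omega}^K$ be a sequence produced by the algorithm and let $y^*$ be a limit point, i.e.\ $y^{(n_j)}\to y^*$ along some subsequence. Denote by $T$ the Lloyd map $y\mapsto T(y)$ that sends a tuple of generators to the tuple of centroids of the corresponding geodesic Voronoi diagrams (with respect to $d_C$ and $\rho$). The strategy is to show that $y^*$ is a fixed point of $T$, since fixed points of $T$ are precisely the critical points of $\cI_{C}$, and then invoke Proposition \ref{prop:gendist_equivalence} once more.

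To establish $T(y^*)=y^*$, I would use two ingredients. First, the sequence $\cI_{C}(y^{(n)})$ is monotonically non-increasing (this is the standard decrease property of Lloyd iterations recalled after \eqref{eq:intro_equiv_CVT}) and bounded below by $0$, hence convergent; in particular $\cI_{C}(y^{(n_j+1)})-\cI_{C}(y^{(n_j)})\to 0$. Second, $T$ is continuous at $y^*$: the geodesic Voronoi diagrams $V(y_k)$ depend continuously (in the sense of characteristic functions in $L^1(\Omega,\rho\,dx)$) on the generators because the set of ties $\{x:d_C(y_i,x)=d_C(y_j,x)\}$ has Lebesgue measure zero for generic generators, and because $\rho$ is smooth and strictly positive on $\Omega$ the denominators in the centroid formulas are bounded away from zero. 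Combining monotone convergence of $\cI_{C}(y^{(n)})$ with the continuity of both $T$ and $\cI_{C}$ forces $\cI_{C}(T(y^*))=\cI_{C}(y^*)$, and since one Lloyd step strictly decreases $\cI_{C}$ unless the generators already coincide with the centroids, we conclude $T(y^*)=y^*$. Consequently $y^*$ is a critical point of $\cI_{C}$, and Proposition \ref{prop:gendist_equivalence}(i) furnishes the corresponding solution of \eqref{eq:gendist_MFG}.

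The main technical obstacle is precisely the continuity of the centroid map $T$. Dealing with the measure-zero set of ties along the cell boundaries requires some care, especially since in the geodesic setting the cell boundaries are not hyperplanes; the smoothness and strict positivity of $\rho$, together with the equivalence of $d_C$ to the Euclidean distance from assumption (iii), are used to ensure that small perturbations of the generators produce small $L^1(\rho)$ perturbations of the indicator functions $\1_{V(y_k)}$, and hence small perturbations of the centroids.
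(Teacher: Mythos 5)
Your skeleton coincides with the paper's: both halves of the theorem are routed through Proposition \ref{prop:gendist_equivalence}. For existence, your argument (continuity of $\cI_{C}$ as in \cite{lylh}, compactness of $\ov\Omega^K$, a global minimum is in particular a critical point, then part (i) of the proposition) is exactly the paper's one-line proof. For the Lloyd statement the paper also goes limit point $\Rightarrow$ critical point of $\cI_{C}$ $\Rightarrow$ solution of \eqref{eq:gendist_MFG}, but it obtains the middle implication by \emph{citing} the known convergence results for the Lloyd algorithm (\cite[Theorem 2.3]{dfg_sinum} in the Euclidean case, \cite{lylh} in the geodesic case) together with standard stability of viscosity solutions; your variant is in one respect cleaner, since applying Proposition \ref{prop:gendist_equivalence}(i) at the limit point yields the explicit solutions $u_k=d_C(\mu_k,\cdot)$ and no viscosity-stability step is needed.

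The one place where you go beyond the paper --- proving, rather than citing, that limit points of the Lloyd iteration are fixed points of the centroid map $T$ --- contains a genuine gap, which you partially flagged yourself. Your continuity argument for $T$ rests on the claim that the tie sets $\{x:\,d_C(y_i,x)=d_C(y_j,x)\}$ have Lebesgue measure zero ``for generic generators''. Two problems: (a) the limit point $y^*$ is not at your disposal, so genericity cannot be assumed there, nor can you assume that the components of $y^*$ remain distinct (generators may collide in the limit, in which case the cells degenerate); (b) under the standing assumptions (i)--(iii) on $C(x)$ the balls need not be strictly convex, and e.g.\ the Minkowski distance with $s=1$ (allowed by the assumptions, and used in the paper's Test 4) has bisectors containing open sets for non-generic pairs of points, so the measure-zero claim genuinely fails at exactly the configurations you cannot exclude. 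In addition, your final step, ``one Lloyd step strictly decreases $\cI_{C}$ unless the generators already coincide with the centroids'', requires uniqueness of the geodesic centroid, i.e.\ of the minimizer in \eqref{eq:gcvt_def}; this is automatic in the Euclidean case by strict convexity of $z\mapsto\int_V |x-z|^2\rho\,dx$, but for a general $d_C$ the centroid map is a priori set-valued and strict decrease needs a separate argument. These are precisely the technical points the cited convergence theorems take care of; a complete write-up should either import them as a black box, as the paper does, or supply these two lemmas --- as it stands, yours does neither.
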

\begin{proof}
	The  first assertion is consequence of  existence of critical  points of the functional $\cI_{C}$ and  the equivalence result provided by Prop. \ref{prop:gendist_equivalence}.
	The second part of the statement follows from the  convergence of the Lloyd algorithm
	and standard stability results in viscosity solutions theory.
\end{proof}

\begin{remark}\label{rem:ex_dist}
We give some examples of geodesic distance $d_C$ and the corresponding  Hamiltonian $H$.
	\begin{enumerate}
		\item if $C(x)=\{p\in\R^d:\,\|p\|_s=(\sum_{i=1}^d|p_i|^s)^{1/s}\le 1\}$ for $s>1$, then $ d_C$ is the  Minkowski distance $d_C(x,y)= \|x-y\|_s$ and $H(x,p)=|p|^2/\|p\|_s$;
		\item if $C(x)=a(x)B(0,1)$, where $a(x)\ge \d>0$, then $H(x,p)=a(x)|p|$. In particular, the Euclidean case corresponds to $a(x)\equiv 1$;
		\item if $C(x)=A(x)^{\half}B(0,1)$, where $A$ is a positive definite matrix such that $A(x)\xi\cdot\xi\ge \d>0$ 
		for $\xi\in\R^d$, then $d_C$ is the Riemannian distance induced by the matrix $A$ on $\R^d$ and $H(x,p)=\sqrt{A(x)p\cdot p}$.
	
	\end{enumerate}
	Moreover it is possible to consider the distance function corresponding to a Hamiltonian $H$ defined by 
	\[H(x,p)=\max\{H_1(x,p),\cdots,H_N(x,p)\},\]
	where $H_n$, $n=1,\dots,N$ are Hamiltonians of the types above.
	
	\begin{figure}[!t]
	\begin{center}
		\includegraphics[width=.4\textwidth]{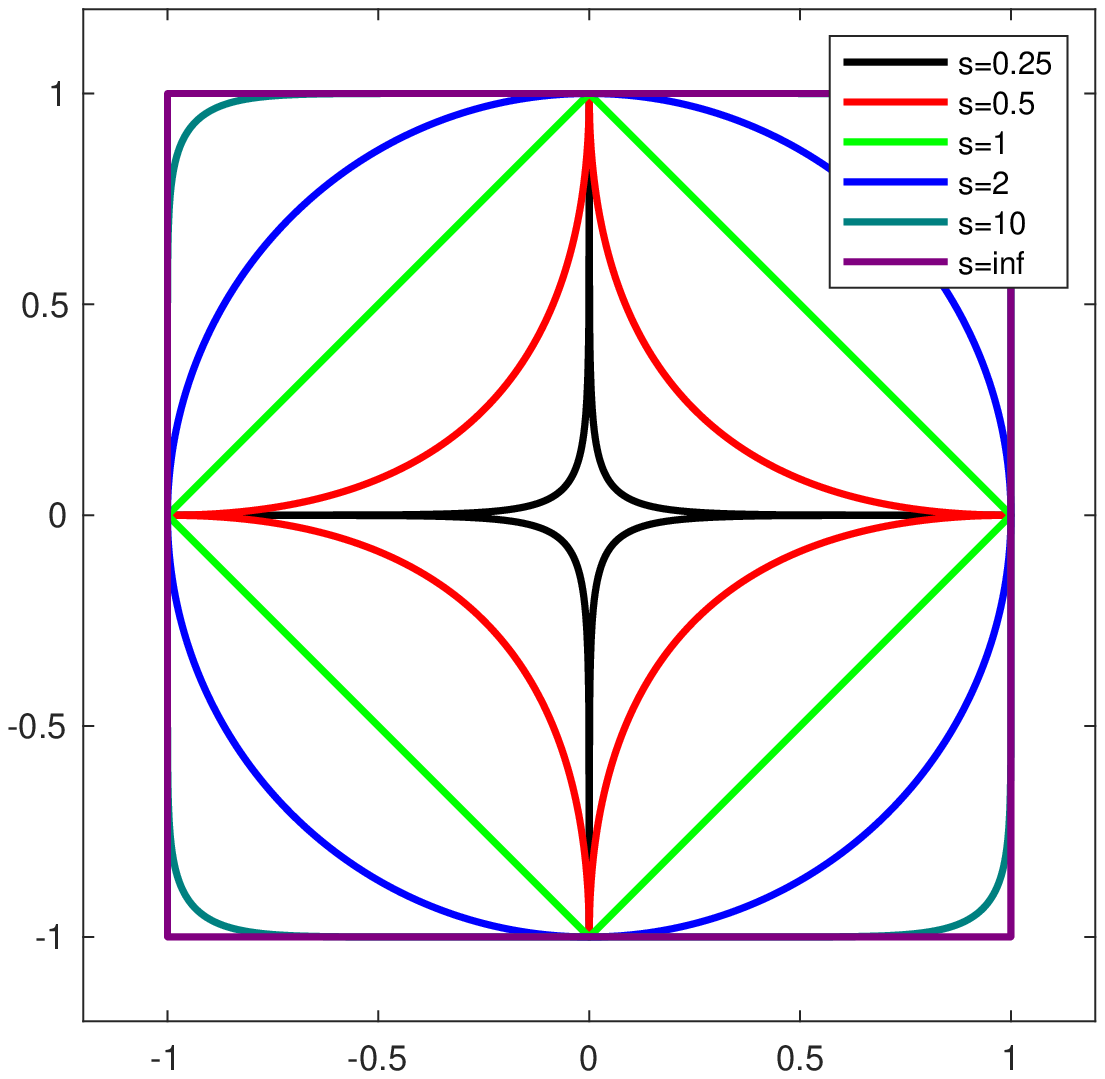}
		\includegraphics[width=.42\textwidth]{./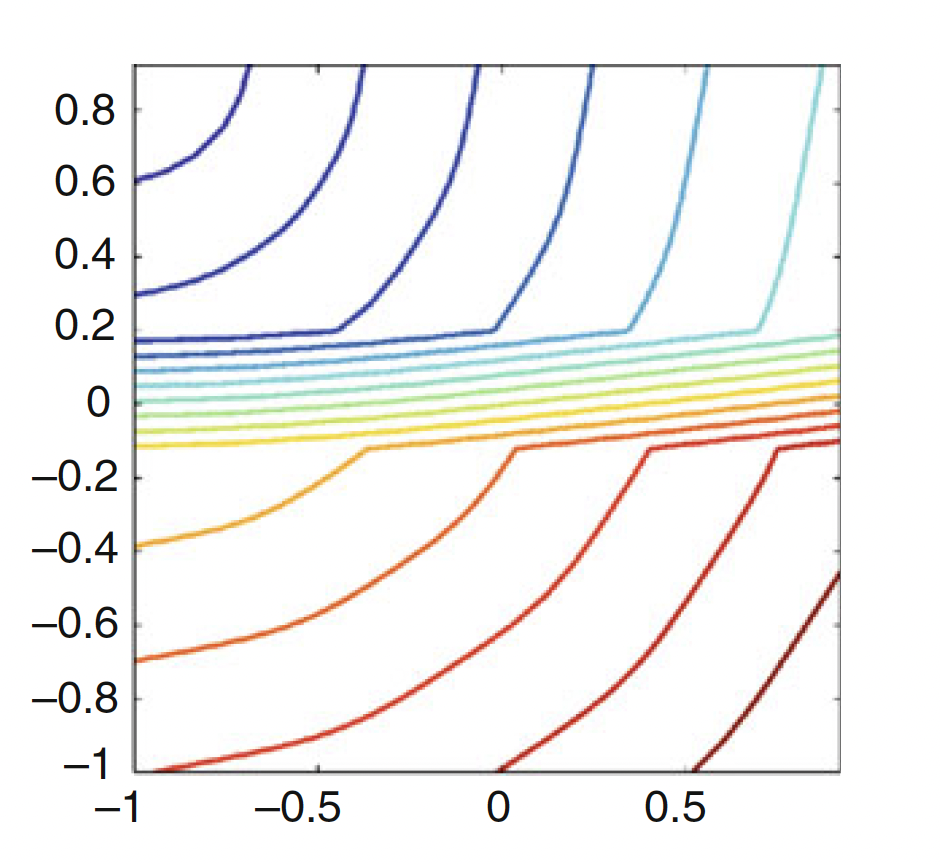}
	\end{center}
	\caption{Unitary balls $B(0,1)$ in the Minkowski  distance for various values of $s$ (left) and a Riemann distance induced by $A(x)=(1,0; 0,(1-0.8\, \chi_{|y|<0.2})^2)$.}\label{figmi}
\end{figure}

\end{remark}

\section{A system of HJ equations for geodesic centroidal power diagrams}\label{sec:power_diagrams}
In this section, we consider a  generalization of  centroidal Voronoi diagrams, called  centroidal power diagrams. We first introduce the definition of power diagrams, or weighted Voronoi diagrams, and then describe centroidal power diagrams and a system of HJ equations that can be used to compute them.\\
Consider the distance $d_C$ defined as in \eqref{eq:gendist_dist}. Given a set  of $K$ distinct points  $\{y_i\}_{i=1}^K$ in $\Omega$ and $K$ real numbers   $\{w_i\}_{i=1}^K$, the geodesic power diagrams generated by the couples $(y_i,w_i)$  are defined by
\begin{equation} \label{eq:pw_def}
V(y_k,w_k)=\{x\in\Omega: \,d_C(x,y_k)-w_k=\min_{j=1,\dots,K}(d_C(x,y_j)-w_j)\}.
\end{equation}
 As Voronoi diagrams, power diagrams provide a tessellation of the domain $\O$, i.e. ${V(y_i,w_i)}^{\mathrm{o}}\cap {V(y_j,w_j)}^{\mathrm{o}}=\emptyset$ for $i\neq j$ and $\cup_{i=1}^K V(y_i,w_i) =\ov \Omega$. Note that, whereas Voronoi diagrams are always non empty,      some of the power diagrams  may be empty and the corresponding generators belong to another diagram. Power diagrams reduce to Voronoi diagrams if the weights $w_i$ coincide, but they   have an additional tuning parameter, the weights vector
 $w=(w_1,\dots,w_k)$, which allows to impose additional constraints on the resulting tessellation.\\
A typical application of power diagrams is the problem of partioning a given set in a capacity constrained manner (see \cite{aha}).  Given  a density function $\rho$ supported in $\Omega$ and $K$ distinct points $\{y_i\}_{i=1}^K$ in $\Omega$, consider the measure $\pi(dx)=\rho(x)dx$   and, to each point $y_i$,  associate a cost $c_i>0$ with the property that $\sum_{i=1}^Kc_i=\pi(\Omega)$. For a partition of $\Omega$ in a family of $K$ subsets $R_i$, define the cost of each subset as $\pi(R_i)=:\int_{R_i}d_C(x,y_i)\,\pi(dx)$.
The aim is to find a partition $\{R_i\}_{i=1}^K$  of $\Omega$ such that    the total cost
\begin{equation*}
 Q(R_1,\dots,R_K)=\sum_{i=1}^K\int_{R_i}d_C(x,y_i)\pi(dx)
 \end{equation*}
is minimized under the constraint $\pi(R_i)=c_i$. In \cite[Theorem 1]{levy_et_altri}, it is shown that the minimum of the previous functional exists and it is reached by a geodesic power diagram  generated by the couples $(y_k,w_k)$, $k=1,\dots,K$,  where the unknown weights $w_k$ can be found by maximizing    the concave functional 
\begin{equation}\label{eq:pw_reg_func}
\cF(w_1,\dots,w_k)=\sum_{i=1}^K\int_{V(y_i,w_i)}d_C(x,y_k)\rho(x)dx-\sum_{i=1}^kw_i( \pi(V(y_i,w_i))-c_i).
\end{equation}
The gradient of $\cF$ is given by
\begin{equation*}
\frac{\partial \cF}{\partial w_i}=c_i- \pi(V(y_i,w_i))
\end{equation*}
and, if $(w_1,\dots,w_k)$ is a critical point of $\cF$, then the power diagram  generated by the couples $(y_i,w_i)$ satisfies the capacity constraint $\pi(V(y_i,w_i))=c_i$.  The previous optimization problem is also connected with the  semi-discrete optimal mass transport problem, i.e.  optimal transport of a continuous measure $\pi$ on a discrete measure $\nu=\sum_{i=1}^K c_i\d_{y_i}$ (see \cite{levy,m}). Algorithms to compute critical points of \eqref{eq:pw_reg_func} are described in \cite{degoes,levy,m}.\\

We   consider geodesic centroidal  power diagrams, i.e. geodesic power diagram for which   generators coincide with the corresponding centroids. Indeed, it   has been observed that the use of centroidal power diagrams in the capacity constrained partionining problem avoid generating irregular or elongated cells  (see \cite{bourne,levy_et_altri}). 
\begin{definition}
	A geodesic power diagram tessellation $\{V(y_i,w_i)\}_{i=1}^K$ of $\Omega$ is said to be a geodesic centroidal power diagram  tessellation   if, for each  $i=1,\dots,K$, the generator $y_i$ of $V(y_i,w_i)$ coincides  with the centroid of $V(y_i,w_i)$, i.e.
\[\int_{V(y_k,w_k)}\rho(x)d_C(y_k,x)dx=\min_{z\in V(y_k,w_k)}\int_{V(y_k,w_k)}\rho(x)d_C(z,x)dx.\]
\end{definition}
In \cite{levy_et_altri}, geodesic  centroidal power diagrams satisfying the capacity constraints  $\pi(V(y_k,w_k))=c_k$ are characterized as a saddle point of the functional
\begin{align*}
	\cG(y_1,\dots,y_k,w_1,\dots,w_k)=&\sum_{i=1}^K\int_{V(y_i,w_i)}d_C(x,y_i)\rho(x)dx\\
	&-\sum_{i=1}^kw_i( \pi(V(y_i,w_i))-c_i).
\end{align*}
Note that the previous functional is  similar to one defined in \eqref{eq:pw_reg_func}, but   it depends also on the generators $(y_1,\dots,y_k)$. For $(y_1,\dots,y_k)$ fixed, $\cG$ is concave with respect to
$w=(w_1,\dots,w_K)$ and therefore it admits a maximizer which determine a power diagram 
$\{V(y_i,w_i)\}_{i=1}^K$. For $(w_1,\dots,w_k)$ realizing the capacity constraints $\pi(V(y_i,w_i))=c_i$, $\cG$ coincides with the functional $\cI_{C}$ in \eqref{eq:gendist_functional}, hence it is minimized by the centroids of sets 
$\{V(y_i,w_i)\}_{i=1}^K$.  \\
We propose the following HJ system for the characterization of the saddle points of $\cG$
\begin{equation}\label{eq:pw_MFG_CPD}
	\left\{
	\begin{array}{ll}
		H(x,Du_k)=1 ,\qquad x\in\Omega, \\[6pt]
		u_k( \mu_k)=-\omega_k,\\[6pt]
		S_u^k=  \{x\in\R^d:\,u_k(x)=\min_{j=1,\dots,K} u_{j}(x)\}\\[6pt]
		\int_{S^k_u}\rho(x)u_k(x)dx=\min\{\int_{S^k_u}\rho(x)u_y(x)dx:\, \\
		\hspace{4.5cm}\text{$u_y$ solution of \eqref{eq:gendist_HJ} with $y\in S^k_u$}\},\\[6pt]
		\pi(S_u^k)=c_k.
	\end{array}
	\right.
\end{equation}
The previous system  depends on the $2K$ parameters $(\mu_k,\omega_k)$. A solution of
\begin{equation*}
	\left\{
	\begin{array}{ll}
	H(x,Du) =1,  \\[6pt]
		u( y)=-\omega,
\end{array}
\right.
\end{equation*}
is given by $u_y(x)=-\omega+d_C(y,x)$. Hence, if there
exists a solution $u=(u_1,\dots,u_k)$ to  \eqref{eq:pw_MFG_CPD}, then $u_k(x)=-\omega_k+d_C(\mu_k,x)$. Moreover   
$$S_u^k= \left\{x\in\R^d:-\o_k+  d_C(\mu_k,x)=\min_{j=1,\dots,K}\{-\o_j+  d_C(\mu_j,x)\}\right\}$$
and  $\mu_k$ is the centroid of $S_u^k$. It follows that the set $S_u^k$ coincides 
 $V(y_k,w_k)$  defined  in \eqref{eq:pw_def}, and $\pi(S_u^k)=c_k$.
We conclude that a solution of \eqref{eq:pw_MFG_CPD} gives a centroidal power diagram  $\{S_u^k\}_{k=1}^K$ of $\O$
realizing the capacity constraint.\\

\section{A Mean Field Games  interpretation of the HJ system}\label{sec:MFG_inter}
In this section, we establish a link between the Hamilton-Jacobi system   introduced in Section \ref{sec:K_means}  and the theory of Mean Field Games (see \cite{cd,ll,gs} for an introduction).  We show that the HJ system   \eqref{eq:gendist_MFG}    can be obtained in the vanishing viscosity  limit of a second order multi-population MFG system characterizing the extremes of a maximal likelihood functional.\\
Finite mixture models, given by a convex combination of probability density functions, are a powerful tool for statistical modeling of data, with applications to pattern recognition, computer vision, signal and image analysis, machine learning, etc (see \cite{bishop}).
Consider  a Gaussian mixture model 
\begin{equation}\label{eq:intro_mix}
	m(x)=\sum_{k=1}^K \a_k \cN(x;\m_k,\gS_k), \quad \text{with $\a_k\in (0,1)$, \; $\sum_{k=1}^K\a_k=1$},
\end{equation}
where  $\m_k$ and $\gS_k$ denote  mean and   covariance matrix of the Gaussian distribution $\cN(x;\m_k,\gS_k)$. 
The aim is to determine  the parameters $\a=(\a_1,\dots,\a_K)$, $\m=(\m_1,\dots,\m_K)$, $\gS=(\gS_1,\dots,\gS_K)$ of the mixture \eqref{eq:intro_mix}    in such a way that they optimally fit a given data set $\mX$ described by the density function $\rho$. This can obtained by maximizing the  log-likelihood functional 
\begin{equation}\label{eq:intro_log}
	\cL(\a,\m,\gS;\mX)=\int_{\R^d}\sum_{k=1}^{K}\g_k(x)\{\ln(\a_k)+
	\ln(\cN(x;\m_k,\gS_k))\} \rho(x)dx,
\end{equation}
where 
\begin{equation*}
	\g_k(x)= \dfrac{\a_k \mathcal{N}(x; \mu_k, \Sigma_k)}{\sum_{j=1}^{K}\a_j \mathcal{N}(x; \mu_j, \Sigma_j)}
\end{equation*}
are the responsibilities, or posterior probabilities (see \cite[Cap. 7]{bishop} for more details). \\
In \cite{accd}, we propose an alternative approach to parameter optimization for mixture models based on the MFG theory.  It can shown   that  the critical points of the log-likelihood functional \eqref{eq:intro_log} can be characterized by means of the multi-population MFG system
\begin{equation}\label{eq:intro_MFG}
	\left\{
	\begin{array}{ll}
		-\e  \Delta u_{k,\e} + \half |D u_{k,\e} |^2+\l_{k,\e}= \frac{\e^2}{2} (x-\mu_{k,\e})^t(\Sigma_{k,\e}^{-1})^t\Sigma_{k,\e}^{-1}(x-\mu_{k,\e}) ,\quad& x\in\R^d, \\[8pt]
		\e    \Delta m_{k,\e} + \diver(m_{k,\e} D u_{k,\e}  )=0,&x\in\R^d, \\[8pt]
		\a_{k,\e}=\int_{\R^d }\g_{k,\e}(x) \rho(x)dx,\\[8pt] 
		m_{k,\e}\ge 0,\,\int_{\R^d} m_{k,\e} dx=1,    u_{k,\e}( \m_{k,\e})=0,
	\end{array}
	\right.
\end{equation}
for   $k=1, \dots, K$, where 
\begin{align}
	&\g_{k,\e}(x)=\dfrac{\a_{k,\e} m_{k,\e}(x)}{\sum_{j=1}^K\a_{j,\e} m_{j,\e}(x)},\nonumber\\
	&\mu_{k,\e}=\frac{\int_{\R^d}x\gamma_{k,\e}(x)\rho(x)dx}{\int_{\R^d} \gamma_{k,\e}(x)\rho(x)dx},\label{eq:intro_resp}\\[8pt]
	&\Sigma_{k,\e}=\frac{\int_{\R^d}(x-\mu_{k,\e})(x-\mu_{k,\e})^t\gamma_{k,\e}(x)\rho(x)dx}{\int_{\R^d} \gamma_{k,\e}(x)\rho(x)dx} \nonumber
\end{align}  
are unknown variables which depend on the solution of the system \eqref{eq:intro_MFG}. 
More precisely, a solution of \eqref{eq:intro_MFG} is given by a family of quadruples  $(u_{k,\e}, \lambda_{k,\e}, m_{k,\e}, \a_{k,\e})$, $k=1,\dots,K$, with 
\begin{equation}\label{eq:intro_sol}
	\begin{split}
		&u_{k,\e}(x)=\frac{\e}{2}(x-\m_{k,\e})^t\gS^{-1}_{k,\e}(x-\m_{k,\e}),\quad\lambda_{k,\e}=\e^2 \text{Tr}(\gS_{k,\e}^{-1}),\\
		&m_{k,\e}(x)=\cN(x;\m_{k,\e},\gS_{k,\e})=C_ke^{-\frac{u_k(x)}{\e}},\\
		&\a_{k,\e}=\int_{\R^d}\gamma_{k,\e}(x)\rho(x)dx,
	\end{split}
\end{equation} 
and the corresponding  parameters  $(\a_{k,\e}, \m_{k,\e}, \gS_{k,\e})$, $k=1,\dots,K$, are a  critical point   of the log-likelihood functional \eqref{eq:intro_log}. Note that in general the solution of \eqref{eq:intro_MFG} is not unique. \\ 
In soft-clustering analysis, the responsibilities can be  used to assign a point to the cluster  with the highest $\g_{k,\e}$, i.e. the   set $\Omega$   is divided into the disjoint  subsets
\begin{equation*}
	S^k_{u,\e}=\{x\in\Omega:\,\g_{k,\e}(x)=\max_{j=1,\dots,K}\g_{j,\e}(x)\}.
\end{equation*}
Taking into account \eqref{eq:intro_resp} and the definition of $m_{k,\e}$ in \eqref{eq:intro_sol}, we see that the clusters $S^k_{u,\e}$ can be equivalently defined as
\begin{equation*} 
	S^k_{u,\e}=\{x\in\Omega:\,u_{k,\e}(x)=\min_{j=1,\dots,K} u_{j,\e}(x)\}.
\end{equation*}
It is well known, in cluster analysis, that the $K$-means functional \eqref{eq:gendist_functional} can be seen as the limit of the maximum likelihood functional   \eqref{eq:intro_log}    when the variance parameter of the  Gaussian mixture model is sent to $0$  (see \cite[Chapter 7]{bishop}).
In order to deduce a PDE characterization for centroidal Voronoi tessellations, we follow a similar idea.  Assuming that $\Sigma_k=\sigma I$ and  passing  to the limit in \eqref{eq:intro_MFG} for $\e,\s\to 0^+$ in such a way that $\e/\s^2\to 1$, we observe that the responsibility $\g_{k,\e}$ in \eqref{eq:intro_resp} converges to the characteristic function of the set where $\a_k m_k$ is maximum with respect to $\a_j m_j$, $j=1,\dots,K$ or,  equivalently,  where $u_k$ is minimum with respect to $u_j$.  Hence, we formally obtain that \eqref{eq:intro_MFG} converges to the first order multi-population MFG system 
\begin{equation}\label{eq:intro_MFG_limit}
	\left\{
	\begin{array}{ll}
		\half |D u_k |^2+\l_k= \half |x-\mu_k|^2 ,\quad& x\in\R^d, \\[8pt]
		\diver(m_k D u_k(x) )=0,&x\in\R^d, \\[8pt] 
		\a_k=\int_{\R^d }\1_{S_u^k}(x) \rho(x)dx,\\[8pt] 
		m_k\ge 0,\,\int_{\R^d} m_k(x)dx=1,    u_k( \m_k)=0,
	\end{array}
	\right.
\end{equation}
for $k=1,\dots,K$, with 
\begin{align}
	&S_u^k=  \{x\in\R^d:\,u_k(x)=\min_{j=1,\dots,K} u_{j}(x)\},\label{eq:intro_cluster_limit}\\ 
	&\mu_{k}=\frac{\int_{\R^d}x \1_{S_u^k}(x)\rho(x)dx}{\int_{\R^d}  \1_{S_u^k}(x)\rho(x)dx}.\label{eq:intro_mean_limit}
\end{align} 
The coupling  among the $K$ systems in \eqref{eq:intro_MFG_limit}  is in the definition of the subsets $S_u^k$ and the coefficient  $\a_k$ represents the fraction of the data set contained in the cluster $S_u^k$.\\ 
In order to write a simplified version of \eqref{eq:intro_MFG_limit}, we observe that the ergodic constant $\l_k$ in the Hamilton-Jacobi equation, which can be characterized as the supremum of the real number $\l$ for which the  equation admits a subsolution (see \cite{bcd}), is  always equal to $0$. Moreover, since the solution $u_k$  is defined up to a constant,  we set $u_k(\mu_k)=0$  and we obtain  $u_k(x)=|x-\mu_k|^2/2$.
The solution, in the sense of distribution, of the second PDE in \eqref{eq:intro_MFG_limit} is given by $m_k=\d_{\m_k}(\cdot)$, where $\d_{\m_k}$ denotes the Dirac function in $\mu_k$. It follows that the HJ equations  are independent of $m_k$ and $\a_k$.   Recalling that the unique viscosity solution of the problem 
\[
\left\{ 
\begin{array}{ll}
	|Du|=1\quad  x\in\R^d, \\[6pt]
	u ( \mu)=0,\\[6pt]
\end{array}
\right.
\]
is given by $u(x)=|x-\mu|$, we can write  the equivalent version of \eqref{eq:intro_MFG_limit}
\begin{equation*} 
	\left\{
	\begin{array}{ll}
		|Du_k|=1\quad  x\in\R^d,  \\[6pt]
		u_k( \m_k)=0,\\[6pt]
		S_u^k=  \{x\in\R^d:\,u_k(x)=\min_{j=1,\dots,K} u_{j}(x)\}, \\[6pt]
		\mu_{k}=\frac{\int_{\R^d}x \1_{S_u^k}(x)\rho(x)dx}{\int_{\R^d}  \1_{S_u^k}(x)\rho(x)dx}
	\end{array}
	\right.
\end{equation*}
for $k=1,\dots,K$,  which is  a system of   HJ  equations coupled through the sets $S_u^k$. Taking into account \eqref{eq:intro_mean_limit}, we see that the previous system coincides with \eqref{eq:gendist_MFG} when $d_C$ is given by the Euclidean distance.

\section{Numerical tests} \label{sec:numerics}
In this section, we present some numerical tests obtained via the approximation of the HJ  systems characterizing the tessellation associated to the problem.\\
 We introduce a regular triangulation of $\O$, the support of $\rho$, given by a collection  of  $N$ disjoint triangles $\mathcal T:=\{T_i\}_{i=1,...,N}$. We denote with $\Delta x$ the maximal area of the triangles, i.e.  $\max_{i=1,...,N}|T_i|<\Delta x$, and we assume that $\Omega\subseteq \bigcup_1^NT_i\approx\Omega$. We denotes with $\mathcal G:=\{X_i\}_{i=1,...,N}$ the set  of the centroids 
of the triangles $T_i$ and, for a piecewise  linear function $U:\mathcal G\to\R$, we set $U_i:=U(X_i)$.
\subsection{Tests for the geodesic $K$-means problem} \label{sec:HJnum}
To test our method, we start with the classical $K$-means problem, i.e. the case where the distance $d_C$ coincides with the Euclidean one. In this case, the Hamiltonian in \eqref{eq:gendist_MFG} is given by $H(x,p) =|p|$ and  the centroids   of the Voronoi diagrams $V(y_k)$ are given by \eqref{centroids}. For the approximation of the HJ equation, we consider the semi-Lagrangian monotone scheme  
$$
G_i(U)=  \min_{a\in B(0,1)}\left\{\mathbb{I}\left[U\right] (X_{i}-ha)+h \right\},
$$
where $h$ is a fictitious-time  parameter (generally taken of order $O(\sqrt{\Delta x})$, see \cite{Festa2017127} for details), and $\mathbb{I}$  a standard linear interpolation operator on the simplices of the triangulation.\\
Our algorithm is a two steps iterative procedure, similar to the Lloyd's algorithm. Starting from an arbitrary assignment $\mu^{(0)}=(\mu^{(0),1},\dots,\mu^{(0),K})$ for the centroids, we iterate
\begin{itemize}
	\item[(i)] For $k=1,\dots,K$ and $i^k=\hbox{argmin}_{i=1,...,N}|X_i-\mu^{(n),k}|$, solve the approximate HJ equations
	\begin{equation}\label{eq:numerics_HJ}
		\left\{
		\begin{array}{ll}
			G_i(U^{(n),k})= 1 ,\quad  i=1,...,N, \\[4pt]
			U^{(n),k}_{i^k}=0,  
		\end{array}
		\right.
	\end{equation}
	where that $U^{(n),k}_{i}$ denotes the value at the $n$-th iteration of the approximate solution of the $k$-th equation at point $X_{i}$, 
	and define 
	$$\cS^{(n+1),k}=  \bigcup\left\{ T_i:\, \hbox{ $i$ is s.t. }\,U^{(n),k}_i =\min_{j=1,...,K}U^{(n),j}_i\right\}.$$
	\item[(ii)] Compute the new centroids points  
	\begin{equation*}
		\mu^{(n+1),k}= \frac{\sum_{T_i\in \cS^{(n+1),k}} X_i |T_i| \rho(X_i)}{\sum_{T_i\in \cS^{(n+1),k}}  |T_i| \rho(X_i)}.  
	\end{equation*}
\end{itemize}
We iterate these two steps till meeting a stopping criterion as 
$$\max_k\{|\mu^{(n+1),k}-\mu^{(n),k}|\}<\varepsilon.$$

\begin{figure}[!t]
	\begin{center}
		\begin{tabular}{cc}
			\hspace{-0.5cm}
			\includegraphics[width=.35\textwidth]{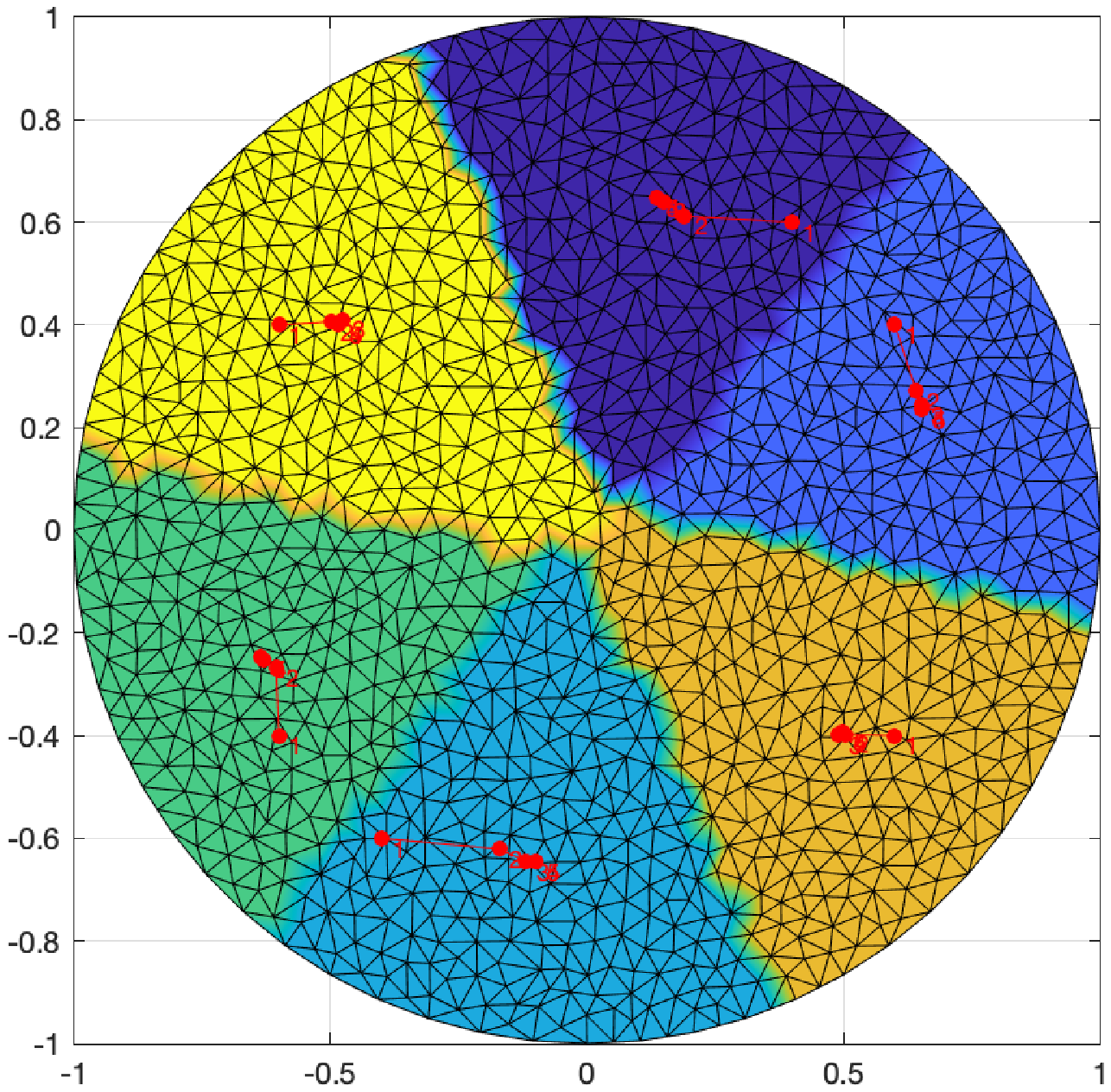}&\hspace{-0.3cm}
			\includegraphics[width=.35\textwidth]{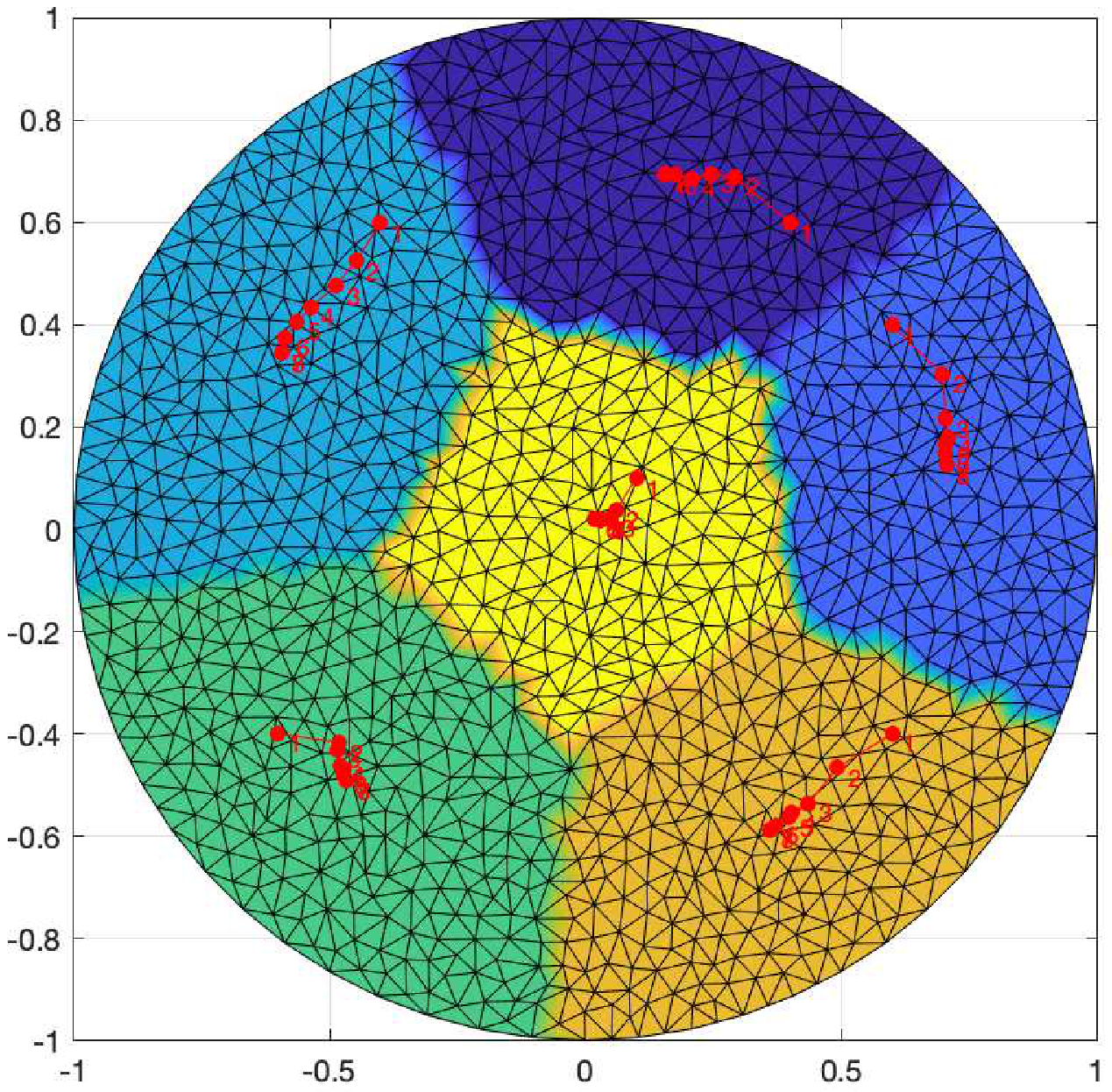}\\
			\includegraphics[width=.35\textwidth]{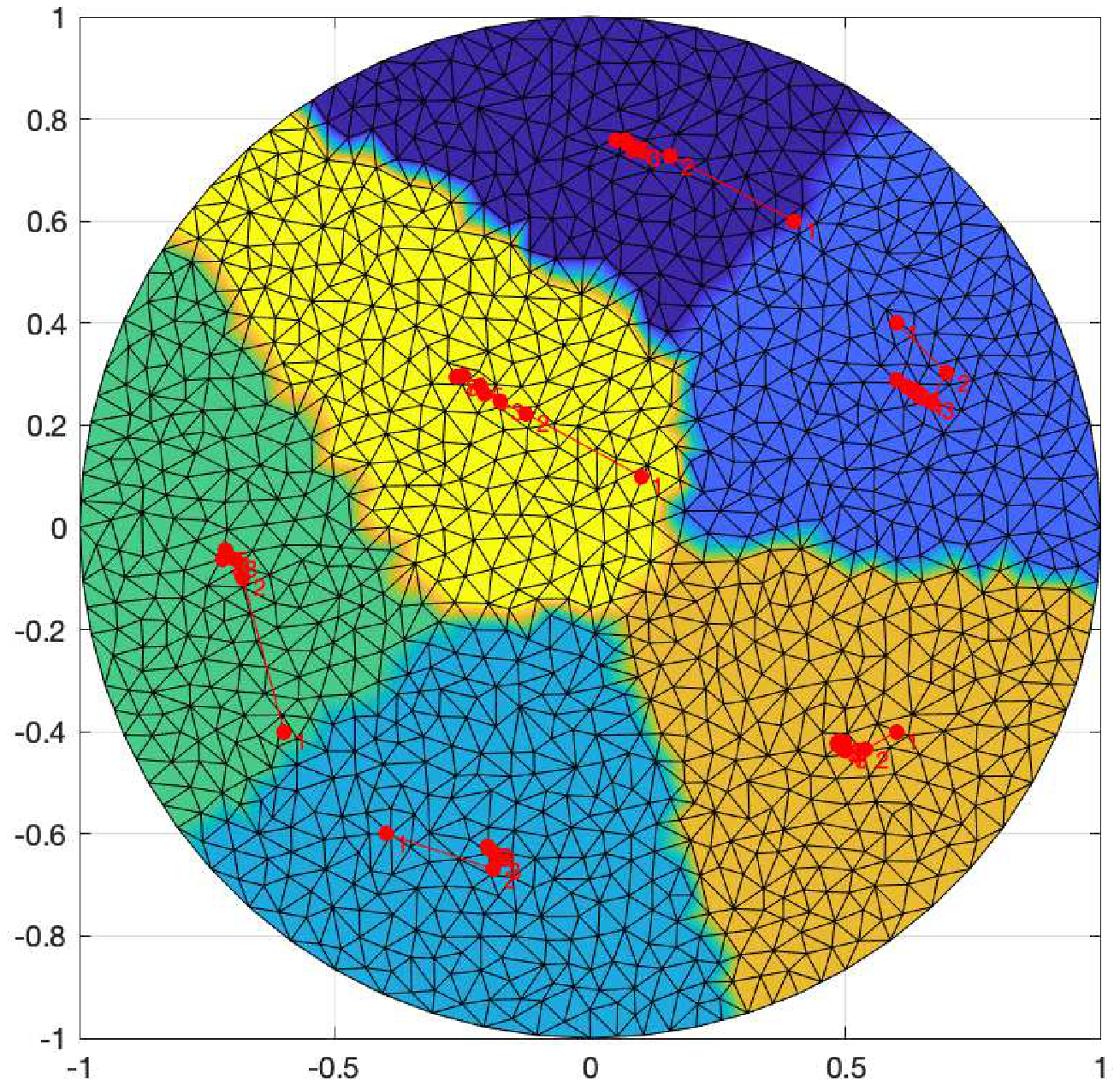}&
			\includegraphics[width=.4\textwidth]{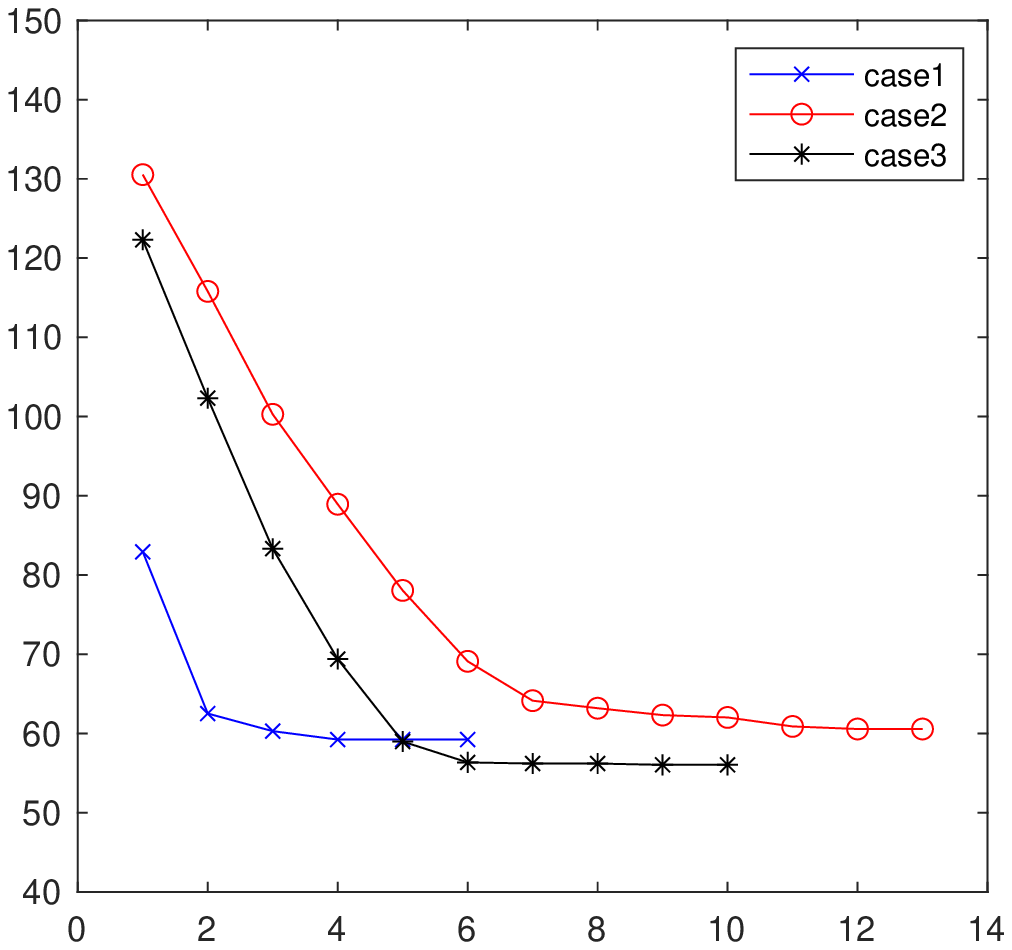}
		\end{tabular}
	\end{center}
	\caption{Three  Voronoi tessellations with $K=6$ computed starting from different initial centroids and  $\Delta x=0.004$, above/left:  {\footnotesize $\mu^{(0)}=( [0.4, 0.6], [0.6, 0.4], [0.6, -0.4], [-0.4, -0.6], [-0.6, -0.4],  [-0.6, 0.4])$};  above/right: {\footnotesize$\mu^{(0)}=( [0.4, 0.6], [0.6, 0.4], [0.6, -0.4],[-0.6, -0.4], [-0.4, 0.6],  [0.1, 0.1])$}; bottom/left: {\footnotesize $\mu^{(0)}=( [0.4, 0.6], [0.6, 0.4], [-0.4, 0.6],[-0.4, -0.6], [-0.6, -0.4],  [0.1, 0.1])$}; bottom/right: evolution of the  $K-$means functional for iteration step of the algorithm.}\label{fig1}
\end{figure}

\paragraph{Test 1.} 
The first test is a simple problem to check the basic features of the technique. We consider a circular domain $\Omega:=B(0,1)$ and we consider a CVT composed of $6$ cells. The density function $\rho$  is chosen uniformly distributed on $\Omega$, i.e.  $ \rho(x)= 1/|\Omega|$, where $|\Omega|=\pi$. We set the approximation parameter  $\Delta x=0.004$  and  the stopping criterion $\varepsilon=\Delta x/10$.
Figure  \ref{fig1} shows tessellations computed by the algorithm   starting from    different sets of initial centroids. 
The evolution of the centroids is marked in red with a sequential number related to the iteration number. We can observe that in all the cases, the centroids move from the initial guess toward an optimal tessellation of the domain, where the optimality is intended referred to the functional \eqref{eq:gendist_functional}. The convergence toward optimality is highlighted in the last picture in Figure~\ref{fig1}, where the value of the K-means functional is evaluated at the end of every iteration for the previous three cases.

\paragraph{Test 2.}

We consider a bounded domain $\Omega$ given by the union of two squares $[0,1]\times[0,1]$, $[-1,0]\times[-1, 0]$ and a section of a circle $B(0,1)\cap[-1,0]\times[0,1]$ and we remove by the domain  the circle $B([-0.4,0.4], 0.2)$,  as displayed in Figure \ref{fig11}. Then, a CVT of $\Omega$ given by three cells, i.e. $K=3$, is computed. 

At first,  the density function $\rho$  is given by  a uniform distribution on $\Omega$, i.e.  $ \rho(x)= 1/|\Omega|$, where $|\Omega|= (2+\pi/4)-\pi(1/5)^2\approx 2.66$. 
\begin{figure}[!t]
	\begin{center}
		\begin{tabular}{cc}
			\hspace{-0.5cm}
			\includegraphics[width=.35\textwidth]{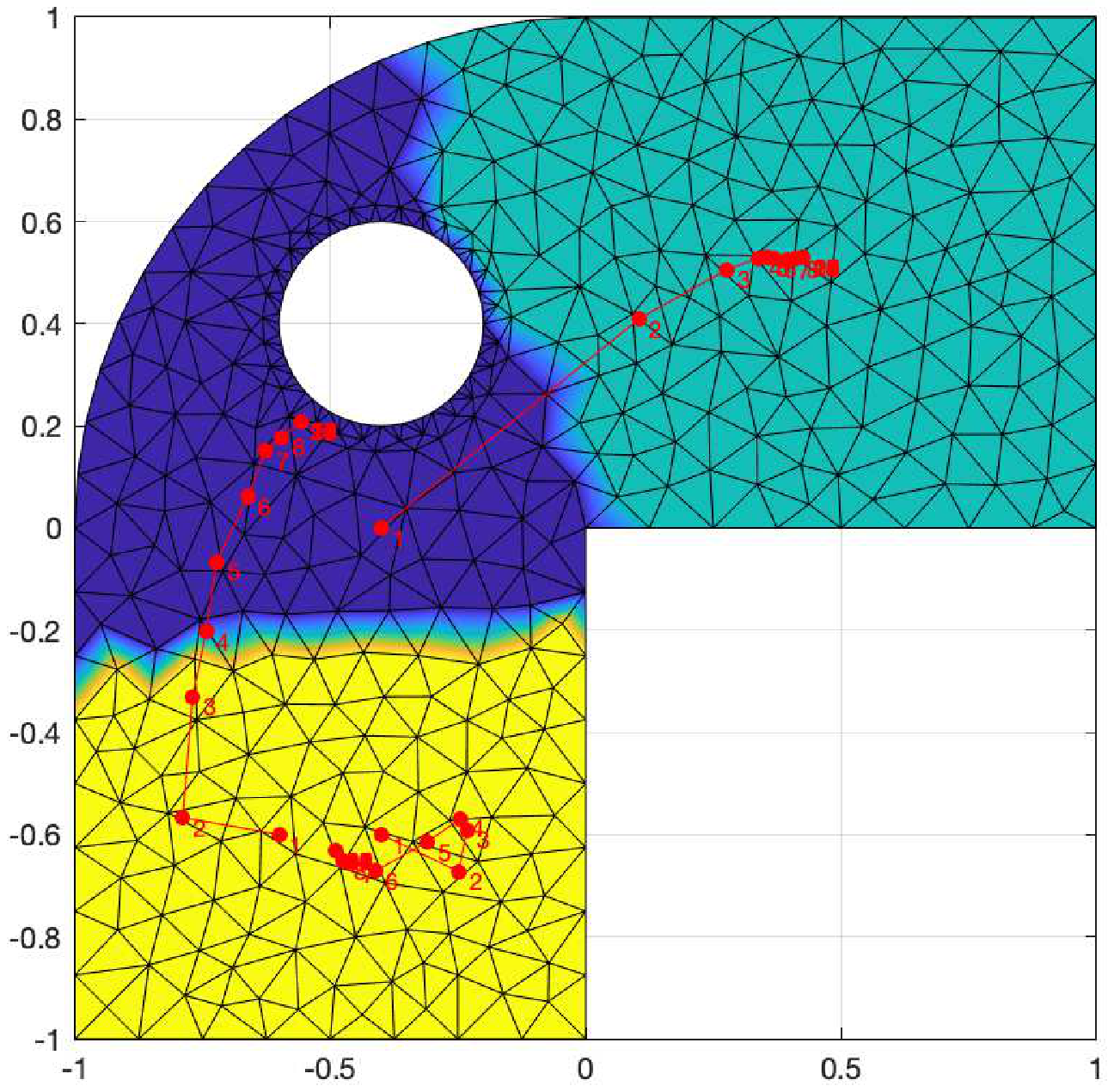}&\hspace{-0.5cm}
			\includegraphics[width=.35\textwidth]{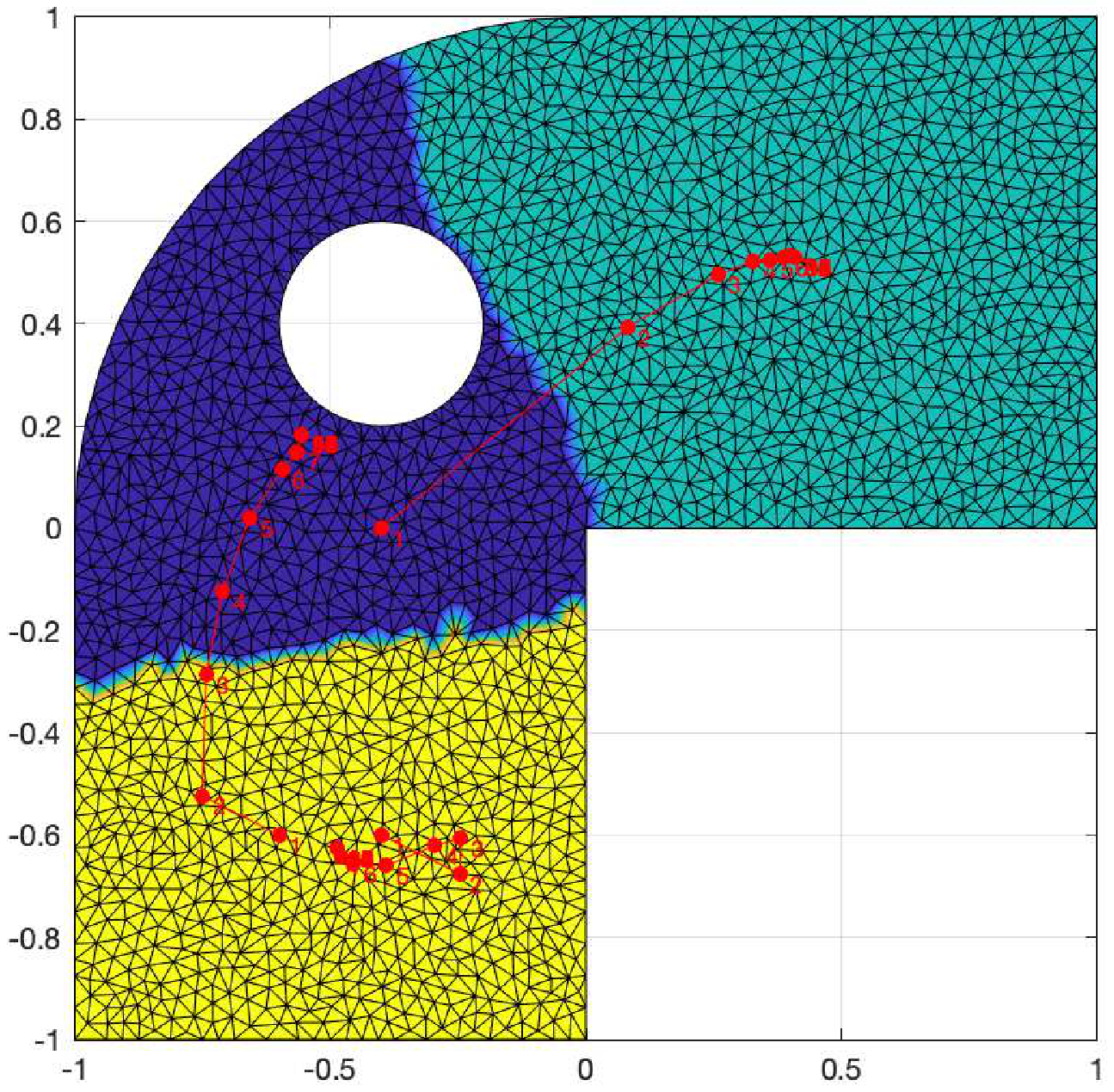}\\
			\includegraphics[width=.35\textwidth]{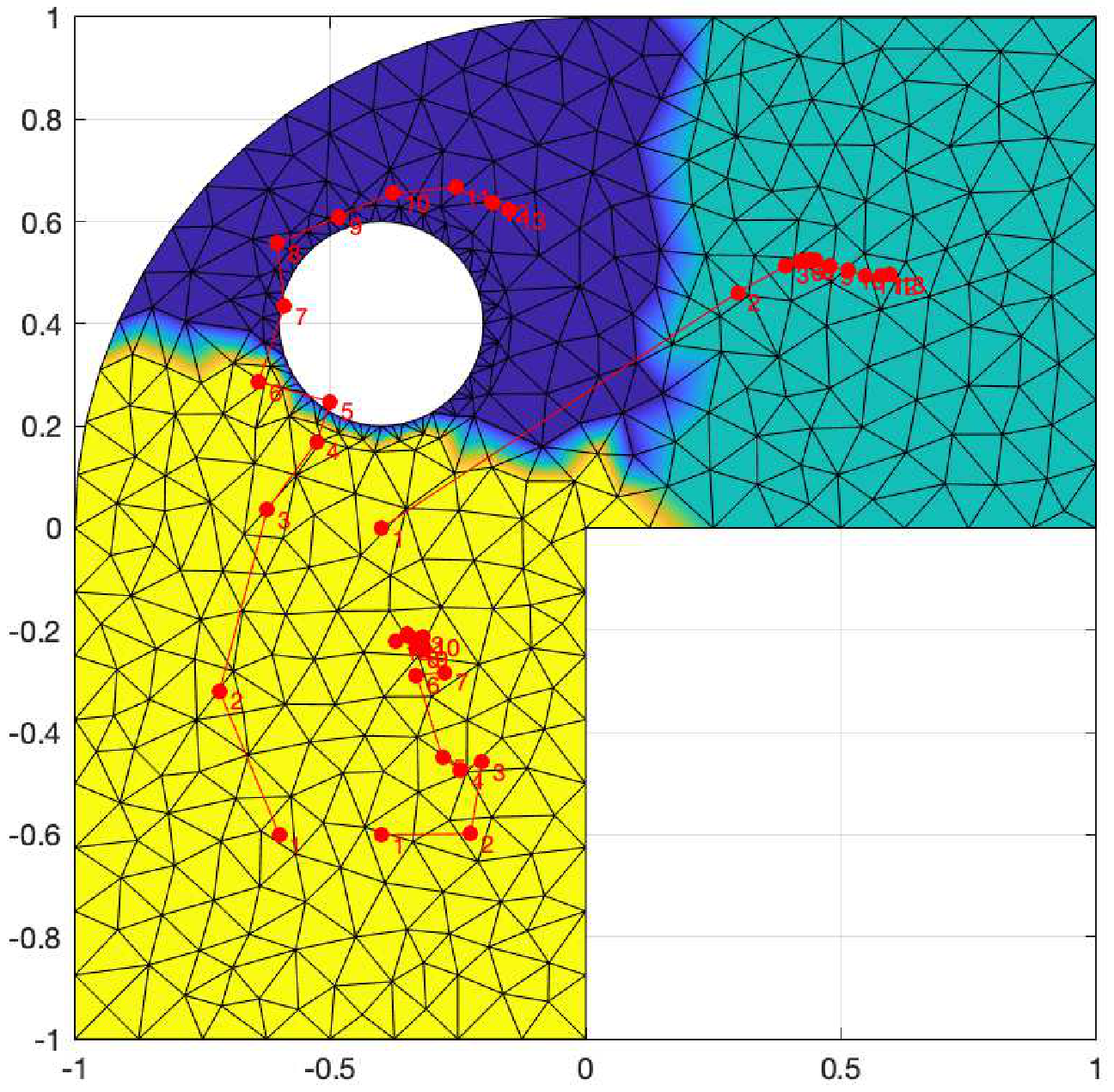}&\hspace{-0.5cm}
			\includegraphics[width=.33\textwidth]{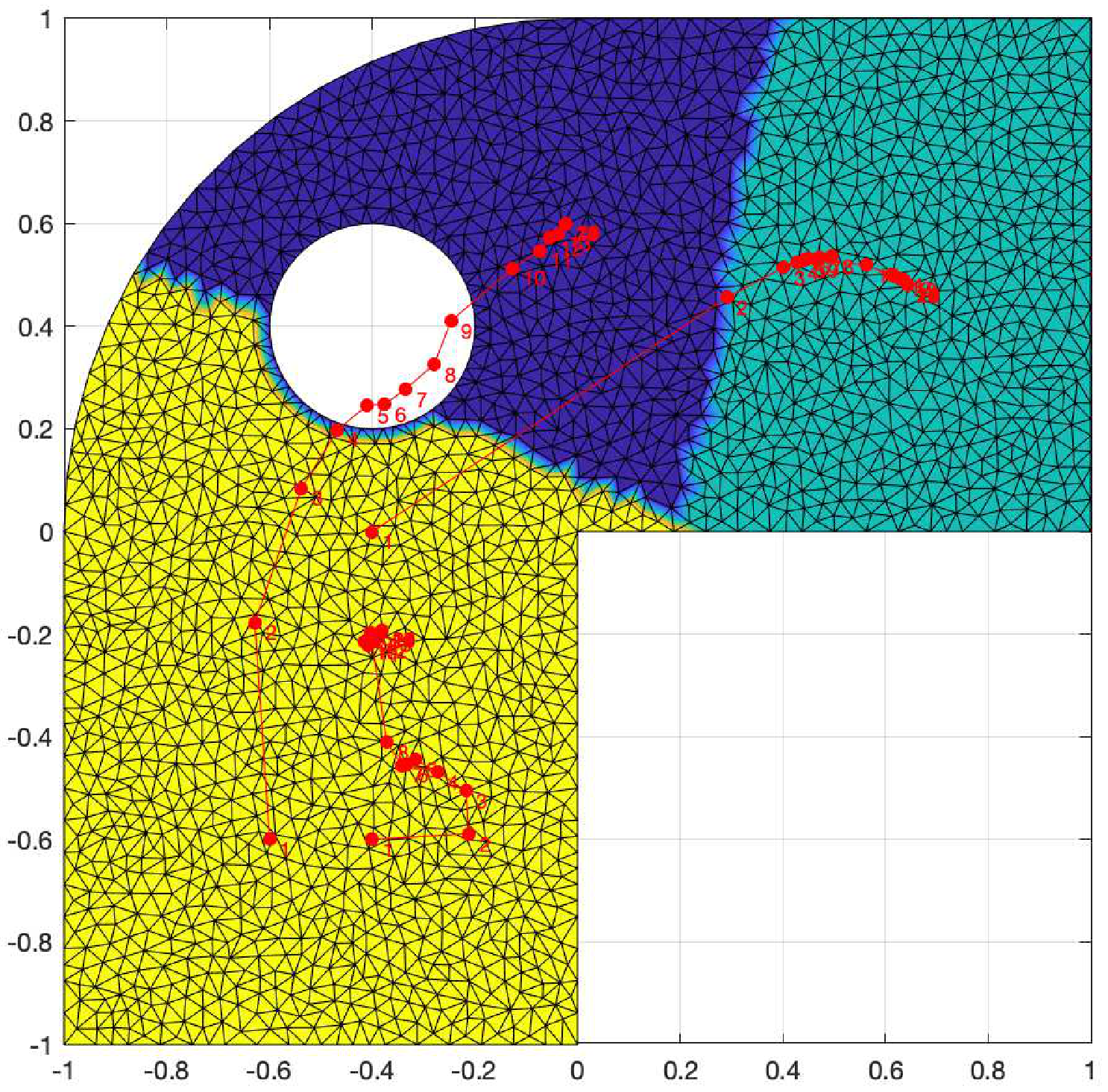}\\
		\end{tabular}
	\end{center}
	\caption{Above: uniformly distributed $\rho$, left: $\Delta x =0.01$,  right: $\Delta x=0.001$. Bottom: $\rho$ is a multivariate normal distribution around $[0.5,0.5]$,  left:  $\Delta x =0.01$;  right: $\Delta x=0.001$.}\label{fig11}
\end{figure}

In Figure  \ref{fig11} we see the evolution  of the centroids $\mu^{(n)}$ starting from the initial position $$\mu^{(0)}=([-0.6,-0.6], [-0.4,-0.6], [-0.4, 0]).$$
The  two images in the top panels of Figure \ref{fig11} are relative to   different discretization parameters $\Delta x:=\max|T_i|$  and  $\varepsilon=\Delta x/10$.  We underline how the number of iterations does not increase much for a smaller stopping parameter, e.g., setting   $\varepsilon$ to $10^{-6}$ we obtained numerical convergence for $n=11$. Moreover, the approximation of the position of the centroids $\mu^{(n)}$, once reached convergence, is sufficiently accurate even in the presence of a discretization parameter $\Delta x$ relatively coarse. This suggests, at least in this example, avoiding excessive refinement of $\Delta x$ to prevent increasing computational cost for the algorithm. 

Even in this easy case, we can observe an additional feature of the method: the approximation of the critical points is monotone with respect to the functional $\cI_{C}$ while a point may have a non-monotone migration toward the correct approximation. This is because the evolution of $\cI_{C}$ in the algorithm is monotone (cf. Fig  \ref{fig1} of the previous test) at any iteration, but not for a single centroid. 

We complete this test with a  case where $\rho$ is not constant. Consider  a multivariate normal distribution around the point $[0.5,0.5]$ and covariance matrix $I$, i.e., 
$$ \rho(x)= \frac{1}{2\pi|\Omega|}e^\frac{{-(x_1-0.5)^2-(x_2-0.5)^2}}{2}.$$
The results are shown in the bottom panels of Figure  \ref{fig11}, with the same choice of the parameters as in the previous test. We observe, as expected, a reduction of the dimension of the sets $\cS^{(n)}_k$ in correspondence to higher values of the density function $\rho$. Even if we need few more steps to reach the numerical convergence, the algorithm shows similar performances and stops for $n=13$.

\begin{remark}
	The previous numerical procedure may be computationally expansive, with the bottleneck given by the resolution of $K$-eikonal equations on the whole domain of interest, see \eqref{eq:numerics_HJ}. In some cases, the first step of Lloyd algorithm may turn to be very expansive, in particular if we use, to solve \eqref{eq:numerics_HJ}, a value iteration method, i.e.  a fixed point iteration on the whole computational domain (see for details \cite{FalcFer}). 
	
	This aspect may be considerably mitigated with the use of a more rational way to process the various parts of the domain, as in the case of Fast Marching methods (see \cite{SethianBook}). In those methods, the nodes of the discrete grid are processed ideally only once, thanks to the information about the characteristics of the problem that may be derived by the same updating procedure. 
	The case of unstructured grids is slightly more complicated than the standard one, and it requires an updating procedure that consider   the geometry of the triangles of the grid. We refer to \cite{SethianVlad} for a precise description of the algorithm in this case.
\end{remark}
We now consider the general case of a geodesic distance $d_C$. As for the Euclidean case, we alternate  the numerical resolution of $K$ HJ equations and updating of the centroids.  To approximate the HJ equation   in \eqref{eq:gendist_MFG}, we consider the semi-Lagrangian scheme     
\begin{equation}\label{semi_lag_scheme}
	G_i(U)=  \min_{\a\in C(X_{i})}\left\{\mathbb{I}\left[U\right] (X_{i}-ha)+h L(X_{i},\alpha)\right\},
\end{equation}
where  $L(x,\a)=\sup_{p\in\R^d}\{p\a-H(x,p)\}$ is the Legendre transform of $H$ (see \cite{FalcFer,Festa2017127}).  
To compute the new centroids,   since  $u_y(x)=d_C(y,x)$,  the   optimization problem in \eqref{eq:gendist_MFG} has its discrete version as
\begin{equation*}
	\sum_{X_j\in \cS^{(n+1),k} }\rho(X_j)d_C(\m_k^{(n+1)},X_j)=
	\min\left\{\sum_{X_j\in \cS^{(n+1),k}}\rho(X_j)d_C(Y,X_j):\,  \text{ $Y\in\cS^{(n+1),k}$}\right\}.
\end{equation*}
and, called $\mathcal{H}(Y)=\sum_{X_j\in\cS^{(n+1),k}}\rho(X_j)d_C(Y,X_j)$, the maximal growth direction is
(see \cite{pc})
\[\delta_k:=D\mathcal{H}(Y)=\half \sum_{X_j\in\cS^{(n+1),k}}\rho(X_j)Dd_C(Y,X_j)n_Y(X_j),\]
where $n_Y(x)$ is the unit vector tangent at $Y$ to the geodesic path joining $X_i$ to $Y$.\\
 Starting from an arbitrary assignment $\mu^{(0)}=(\mu^{(0),1},\dots,\mu^{(0),K})$ for the centroids, we iterate
\begin{itemize}
	\item[(i)] For $k=1,\dots,K$ and $i^k=\hbox{argmin}_{i=1,...,N}|X_i-\mu^{(n),k}|$, solve the problem
	\begin{equation*}
		\left\{
		\begin{array}{ll}
			G_i(U^{(n),k})= 1 ,\quad  i=1,...,N, \\[4pt]
			U^{(n),k}_{i^k}=0,  
		\end{array}
		\right.
	\end{equation*}
	and define 
	$$\cS^{(n+1),k}=  \bigcup\left\{ T_i:\, \hbox{ $i$ is s.t. }\,U^{(n),k}_i =\min_{j=1,...,K}U^{(n),j}_i\right\}.$$
	\item[(ii)]  For $k=1,\dots,K$ compute the new centroids    iterating a gradient descent search. More precisely, fixed a tolerance $\varepsilon_c>0$,  initialize $z_k=\mu^{(n),k}$ and  iterate
	\begin{itemize}
		\item[(a)] Find the value $\alpha_k$ defined as
		\begin{align*}
			\alpha_k&:=\arg\min \mathcal{H}(z_k+\alpha_k\delta_k)\\
			&=\arg\min\sum_{X_j\in \cS^{(n+1),k}}\rho(X_j)d_C(z_k-\alpha_k\delta_k,X_j).
		\end{align*}
		\item[(b)] If $|\alpha_k|<\varepsilon_c$, set $\mu^{(n+1),k}=z_k-\alpha_k\delta_k$, otherwise set $z_k=z_k-\alpha_k\delta_k$ and go back to step (a).
	\end{itemize}
\end{itemize}
We iterate these (i)-(ii) untill meeting a stopping criterion as 
$$\max_k\{|\mu^{(n+1),k}-\mu^{(n),k}|\}<\varepsilon.$$

For the numerical tests, we consider the case of the Minkowski distance on $\R^2$, see Remark \ref{rem:ex_dist}.
We remind that such a distance generalizes the \textit{Manhattan distance} (case $s=1$) and the  \textit{Chebyshev distance} (case $s\rightarrow\infty$, $d_C(x,y)=\max_i(|x_i-y_i|)$). In Figure \ref{figmi} are shown the balls $B(0,1)$ in the Minkowski distance for various values of $s$.

\paragraph{Test 3.}
We first consider the problem on a simple L-shaped bounded domain $\Omega=[0,1]\times[0,1]\cup[-1,0]\times[-1, 1]$ as displayed in Figure \ref{fig4}. In this case, the Chebyshev distance (i.e. $s\rightarrow \infty$) provides an optimal tessellation which is trivially guessed: due to the geometrical characteristics of the domain and the distance (the contour lines of the distance from a point are squares, see Fig. \ref{figmi}) the solution, for $K=3$ and uniform density function ($ \rho(x)= 1/|\Omega|$, where $|\Omega|= 3$),
is  simply composed by the three squares $\{[0,1]^2,[-1,0]\times[-1,0],[-1,0]\times[0,1]\}$, with the centroids $\bar \mu=([0.5,0.5],[-0.5,-0.5],[-0.5,0.5])$.

\begin{figure}[!t]
	\begin{center}
		\begin{tabular}{cc}
			\hspace{-0.5cm}
			\includegraphics[width=.34\textwidth]{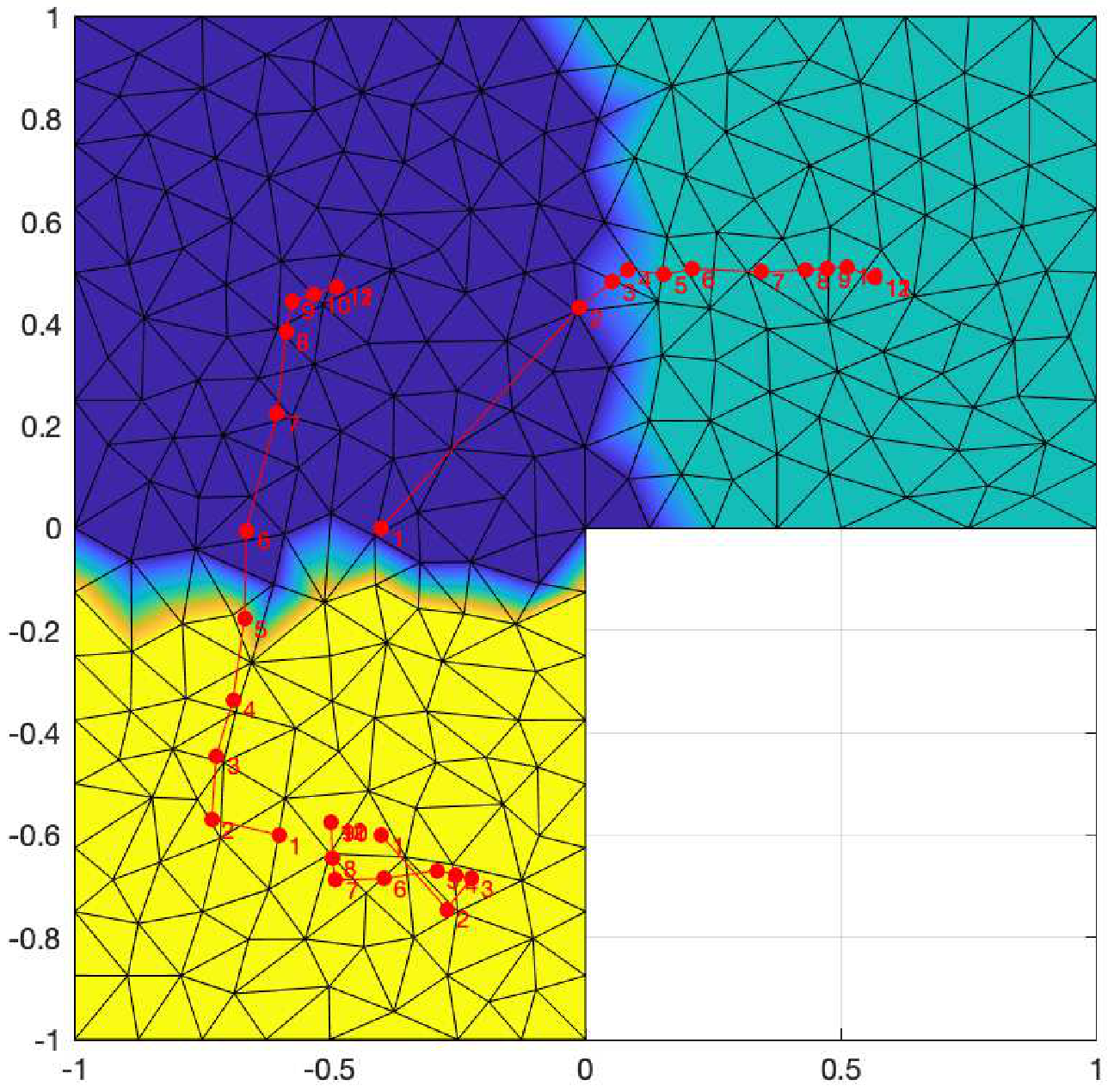}&\hspace{-0.5cm}
			\includegraphics[width=.35\textwidth]{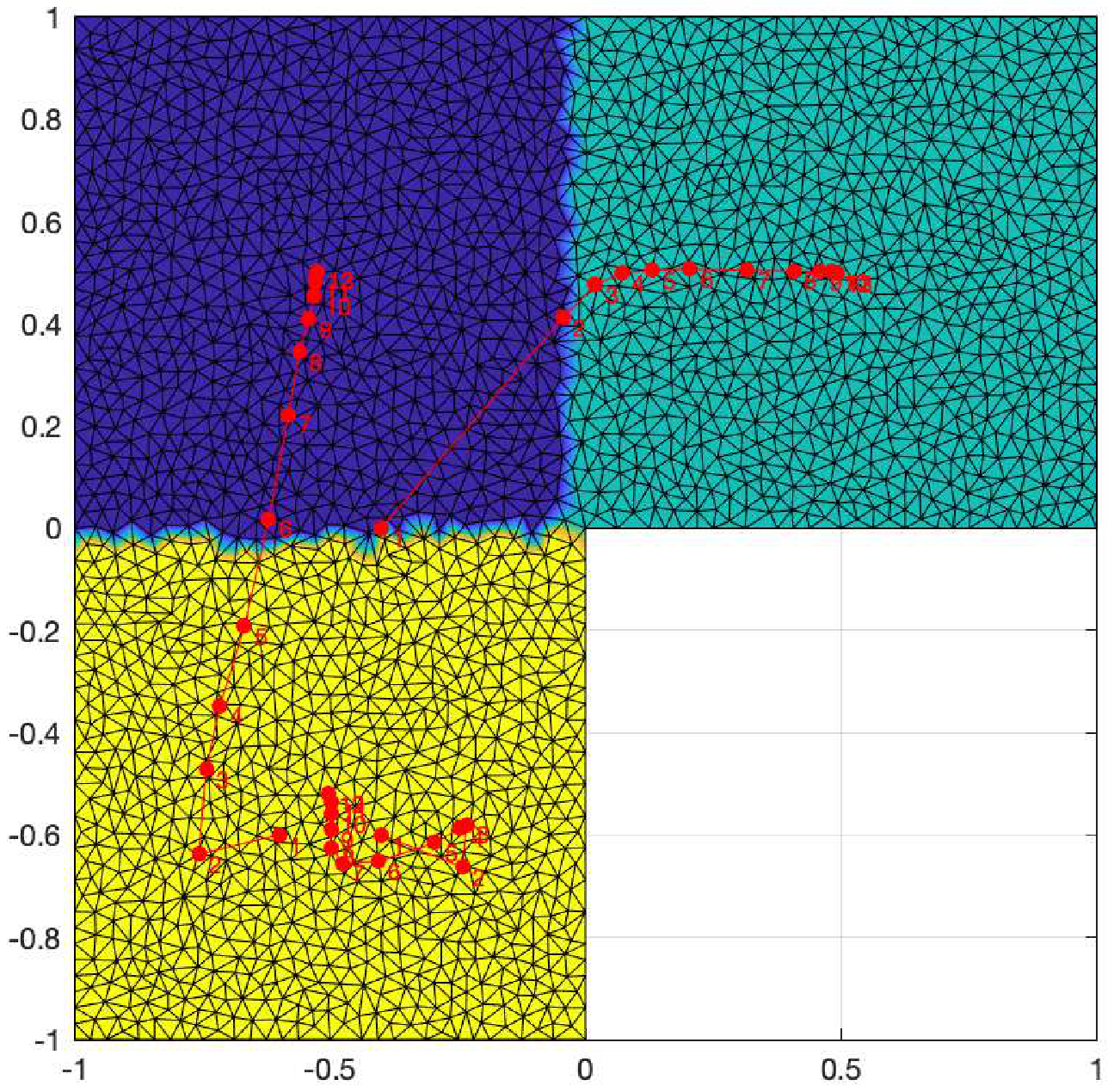}\\
			\includegraphics[width=.35\textwidth]{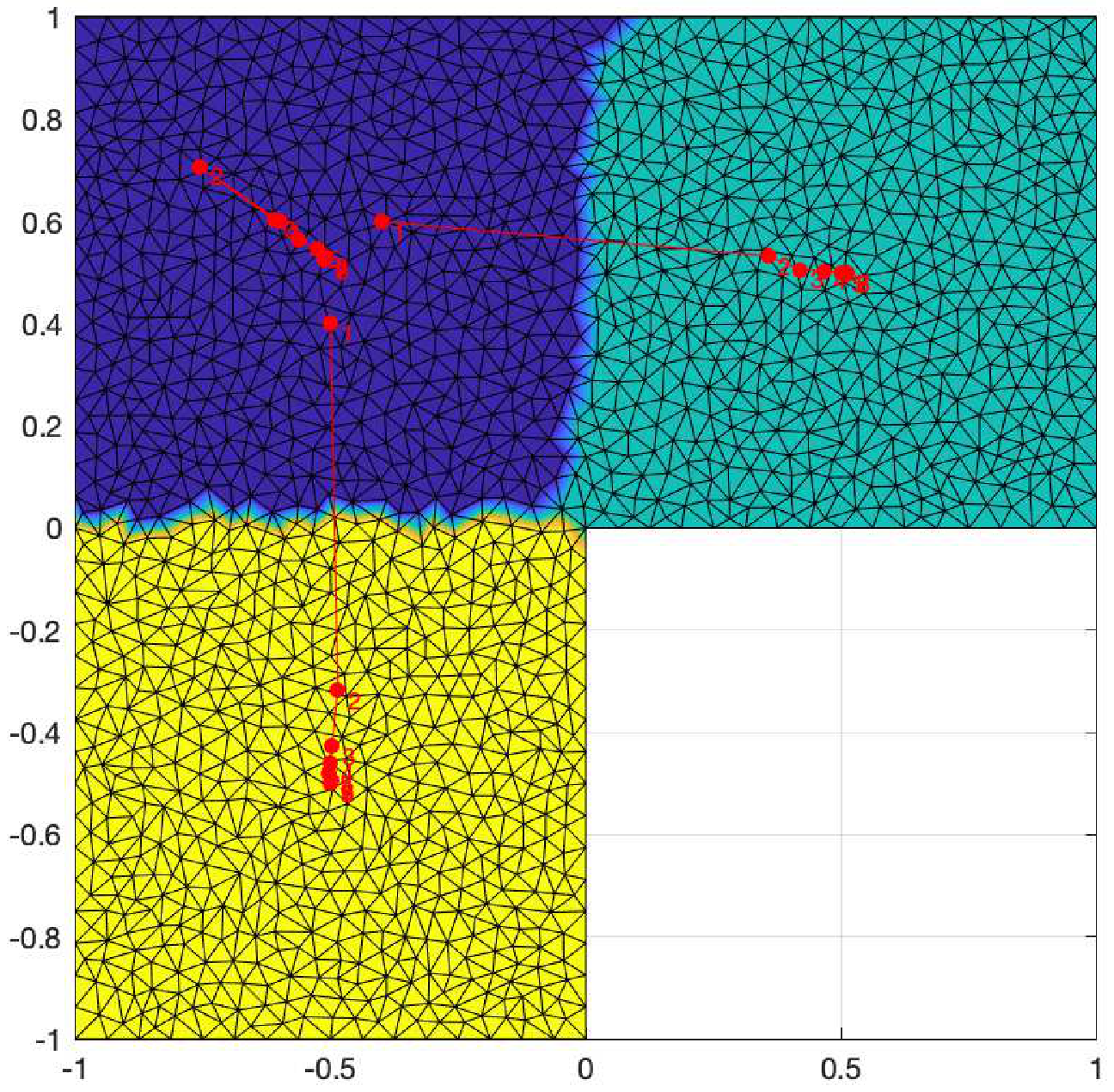}&\hspace{-0.5cm}
			\includegraphics[width=.36\textwidth]{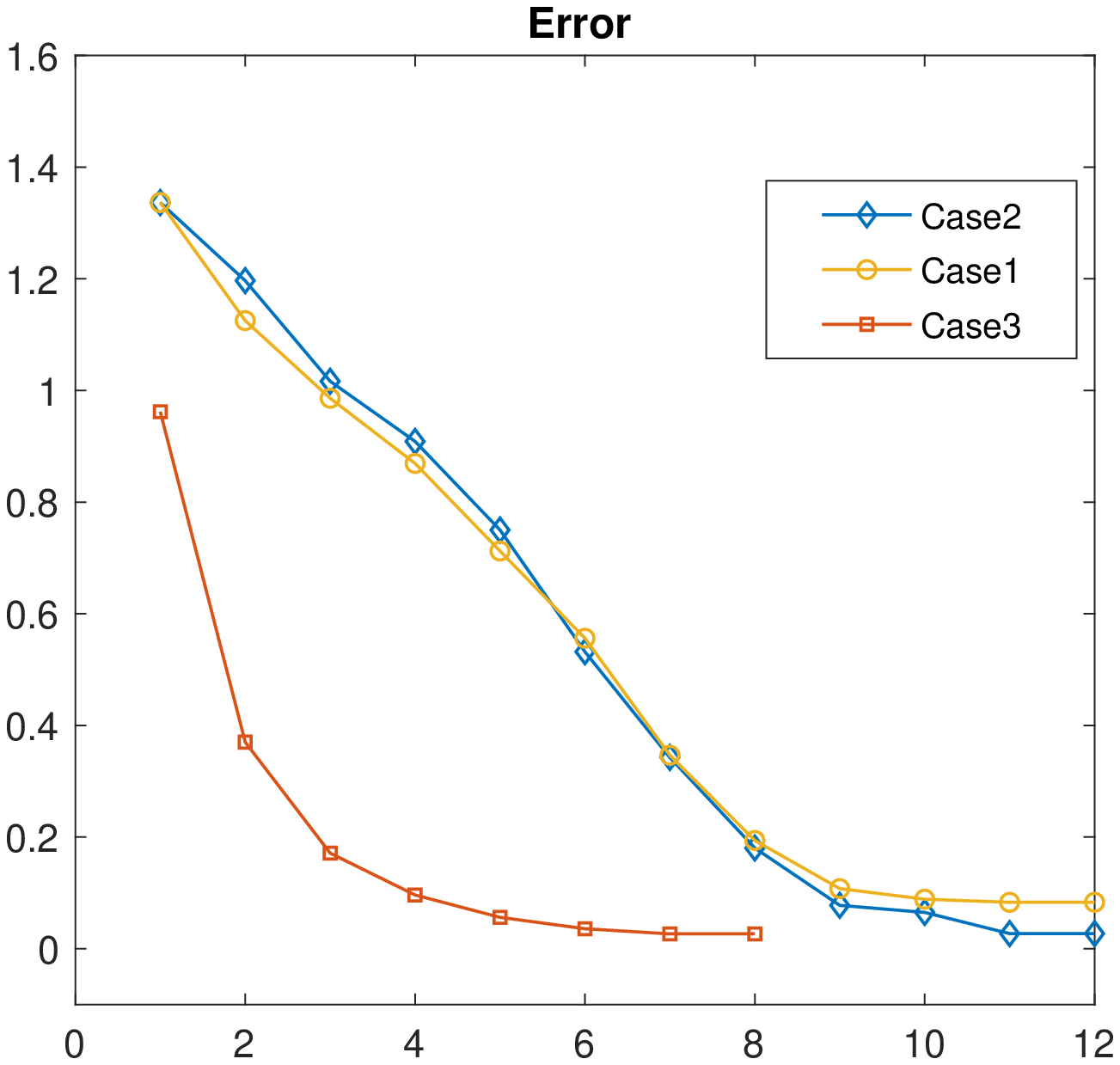}\\
		\end{tabular}
	\end{center}
	\caption{Above: left panel  $\Delta x =0.01$;  right panel: $\Delta x=0.001$ starting from $\mu^{(0)}=([-0.6,-0.6], [-0.4,-0.6], [-0.4, 0])$. Bottom: left panel $\mu^{(0)}=([-0.6,0.6], [-0.4,0.6], [-0.5, 0.4])$, $\Delta x=0.001$, right panel, convergence of the error on the centroids for iterations on the euclidean norm.}\label{fig4}
\end{figure}

In Figure  \ref{fig4}, top panels, we see the evolution  of the centroids $\mu^{(n)}$ starting from the initial position $\mu^{(0)}=([-0.6,-0.6], [-0.4,-0.6], [-0.4, 0])$ for two different values of $\Delta x$.  Also in this case we can observe as the position of the centroids and the rough structure of the tessellation is correctly reconstructed even in the presence of a larger grid. This is also highlighted by the evolution of the euclidean norm of the  error $\mu^{(n)}-\bar \mu$ reported in Fig.  \ref{fig4} (bottom right). The evolution of the error and the total number of iterations necessary to converge to the correct approximation are barely affected by $\Delta x$. On the other hand, the final approximation apparently converges to $\bar \mu$ with order $\Delta x$. 

We perform the same test with a different initial position $\mu^{(0)}$  equal to $([-0.6,0.6], [-0.4,0.6], [-0.5, 0.4])$, see Fig.  \ref{fig4},  bottom left panel. Since in this case the optimal tessellation is unique, the algorithm converges to the same configuration. The number of iterations necessary is clearly affected by the initial guess of $\mu$.\\

\paragraph{Test 4.}
We consider the Minkowski distance with $s=1$. Since contour lines  of the distance  assumes a rhombus shape, we expect to be able to see this in the tessellation that we obtain. In addition, we want to show as our technique, with the help of an acceleration method, can successfully address the GCVT problem with a larger $K$. This is not intended to be an accurate performance evaluation (which is not the main goal of this paper), but only a display of the possibilities give by the techniques proposed. 
\begin{figure}[!t]
	\begin{center}
		\begin{tabular}{cc}
			\hspace{-0.5cm}
			\includegraphics[width=.52\textwidth]{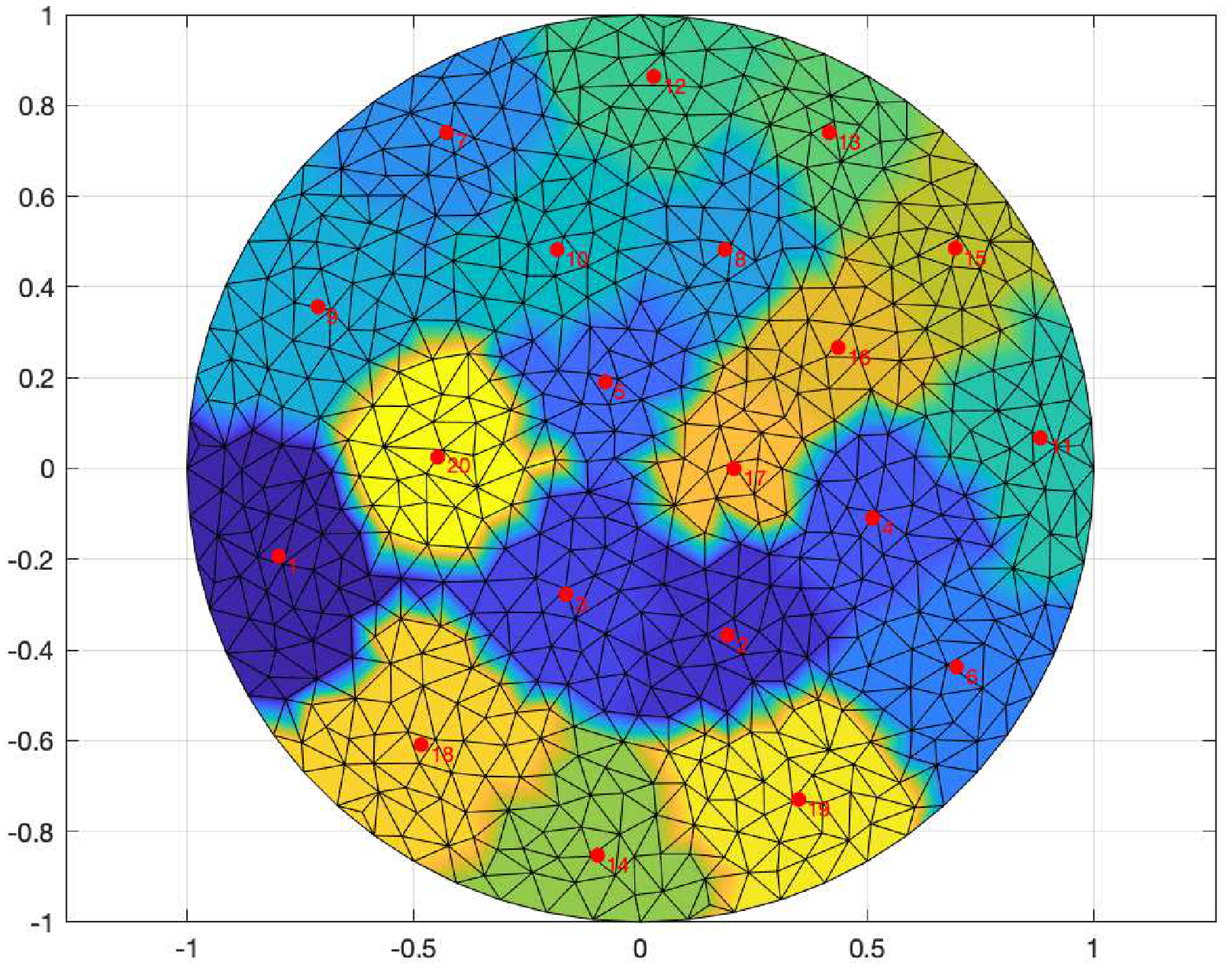}&\hspace{-0.5cm}
			\includegraphics[width=.54\textwidth]{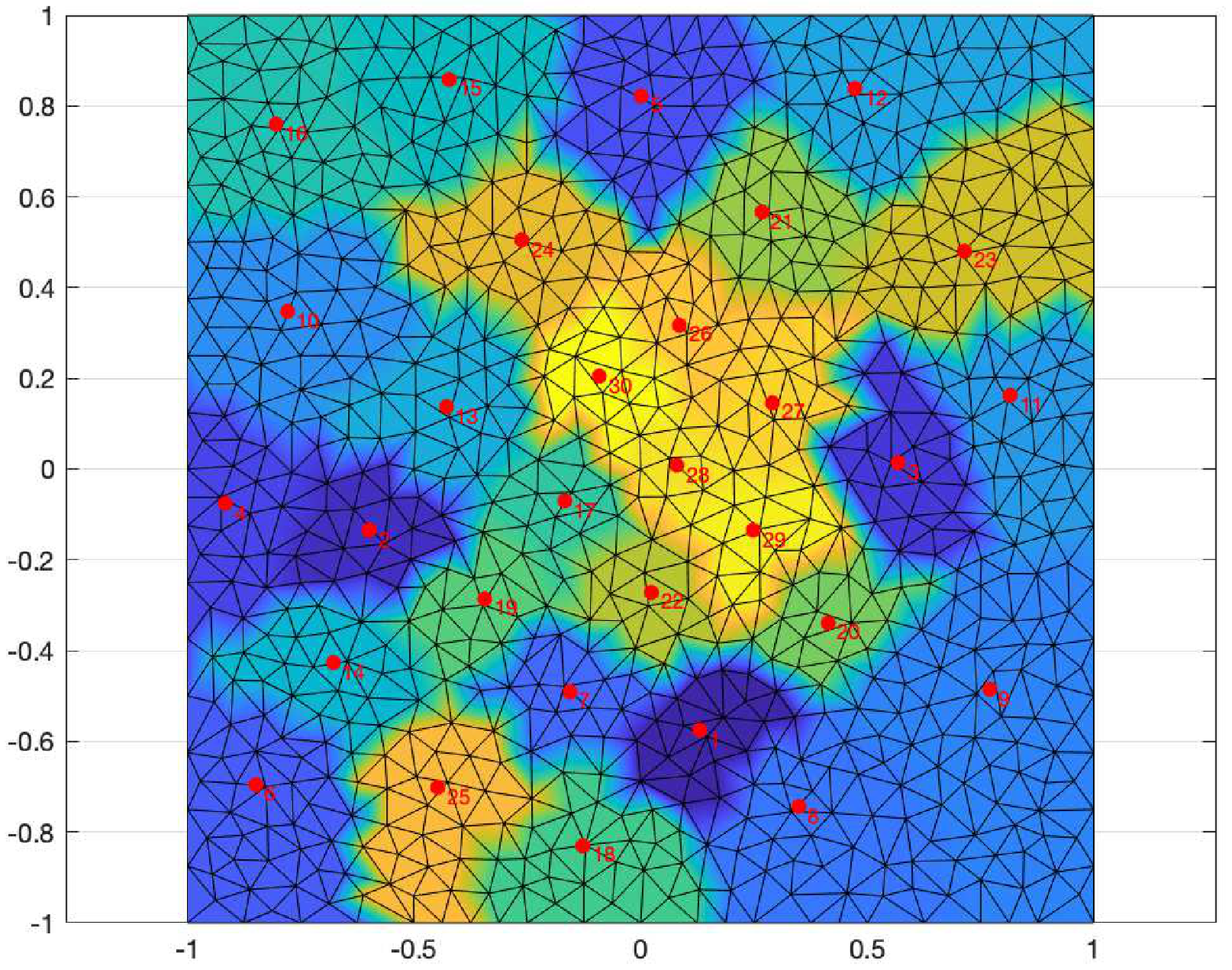}
		\end{tabular}
	\end{center}
	\caption{Two tessellations with larger number of sets. On the left: $\Delta x =0.04$, uniform $\rho$, $K=20$. On the right: $\Delta x =0.04$, multivariate $\rho$ around the origin, $K=30$. In this figure, the numbers are merely to identify the k-centroid of the tessellation.}\label{fig20}
\end{figure}
We consider tesselations of $\Omega=B(0,1)$ with $K=20$ and of $\Omega=[-1,1]^2$ with $K=30$. In the first case (the circle), the function $\rho$  is given by  a uniform distribution, while, in the second case, by multivariate normal distribution around the point $[0,0]$ and covariance matrix $I$, i.e., 
$$ \rho(x)=\rho([x_1,x_2])= \frac{1}{2\pi|\Omega|}e^\frac{{-(x_1)^2-(x_2)^2}}{2},$$
where $|\Omega|=4$. The resulting  tessellations are shown in Figure \ref{fig20}. 

We see that our technique can address without too much troubles a problem with an higher $K$: indeed, this parameter enters in the first step of the Lloyd algorithm linearly. Since we did not observe a substantial change of the number of iterations of the algorithm for a larger $K$, the technique remains computationally feasible, even performed on a standard laptop computer.

\subsection{Tests for  geodesic centroidal power diagrams}
The procedure to obtain an approximation of centroidal power diagrams contains all the tools already described in the previous sections and  it includes a three steps procedure: resolution of $K$ HJ equations,   update of the centroids points and   optimization step for the weights. Starting from an arbitrary assignment $(\mu^{(0)},w^{(0)})=(\mu^{(0),1},\dots,\mu^{(0),K},w^{(0),1},\dots,w^{(0),K})$ for the centroids and   the weights, we iterate
\begin{itemize}
\item[(i)] 
For $k=1,\dots,K$ and $i^k=\hbox{argmin}_{i=1,...,N}|X_i-\mu^{(n),k}|$, solve the problem
\begin{equation*}
	\left\{
	\begin{array}{ll}
		G_i(U^{(n),k})= 1 ,\quad  i=1,...,N, \\[4pt]
		U^{(n),k}_{i^k}=-w_k^{(n-1)},  
	\end{array}
	\right.
\end{equation*}
where $G$ is as in \eqref{semi_lag_scheme}, and define 
$$\cS^{(n+1),k}=  \bigcup\left\{ T_i:\, \hbox{ $i$ is s.t. }\,U^{(n),k}_i =\min_{j=1,...,K}U^{(n),j}_i\right\}.$$

\item[(ii)] Compute the new centroids points  
\begin{multline*}
	\sum_{X_j\in \cS^{(n+1),k} }\rho(X_j)d_C(\m_k^{(n+1)},X_j)
	\\
	=
	\min\left\{\sum_{X_j\in \cS^{(n+1),k}}\rho(X_j)d_C(Y,X_j):\,  \text{ $Y\in\cS^{(n+1),k}$}\right\}.
\end{multline*}
\item[(iii)]  Compute the new weights $w^{(n+1),k}$ as local maximum of the Lagrangian function
\begin{align*}
L(Y,w)=\sum_{i=1}^k \sum_{X_j\in \cS^{(n+1),k}}\rho(X_j)d_C(Y,X_j)-\sum_{i=1}^k w_k( \pi(\cS^{(n+1),k})-c_k) 
\end{align*}

\end{itemize}
We iterate these three steps till meeting a stopping criterion as 
$$\max\{|\mu^{(n+1),k}-\mu^{(n),k}|, |\o^{(n+1),k}-\o^{(n),k}|\}<\varepsilon.$$

\begin{figure}[!t]
\begin{center}
\begin{tabular}{cc}
\hspace{-0.5cm}
\includegraphics[width=.33\textwidth]{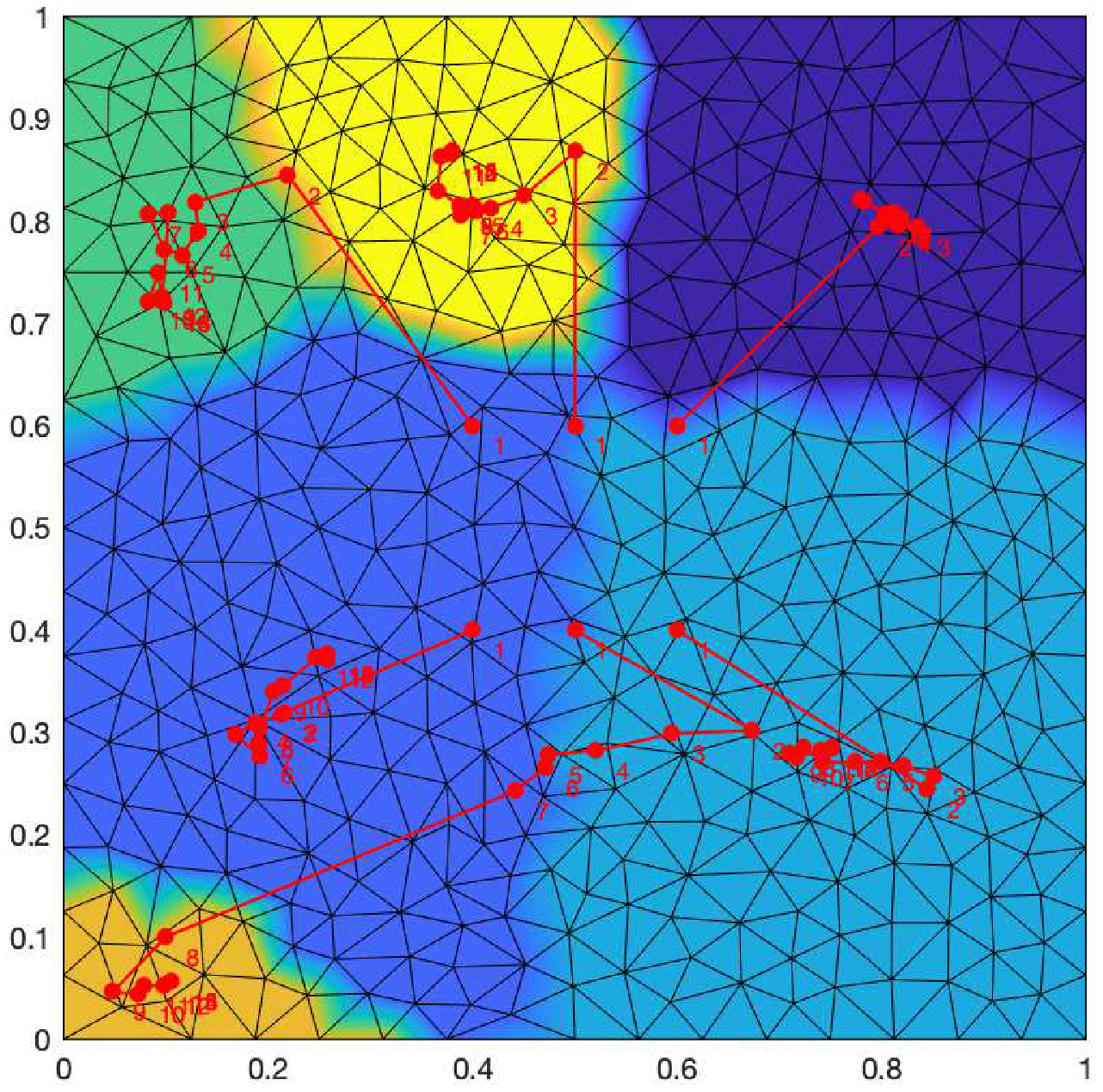} 
\includegraphics[width=.33\textwidth]{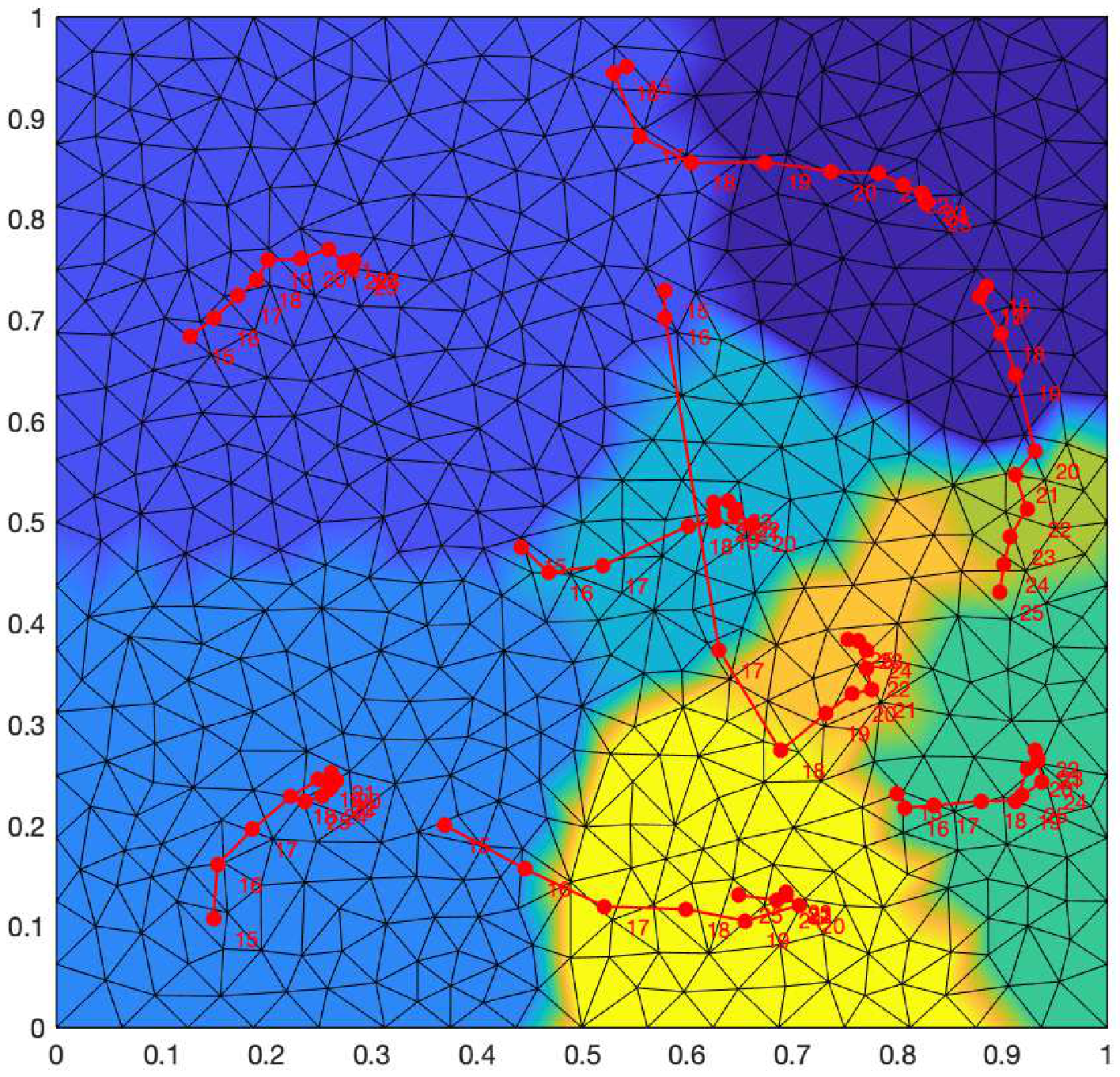}
\end{tabular}
\end{center}
\caption{left panel: $K=6$, $\Delta x =0.015$, $\mu^{(0)}=\{0.4, 0.5, 0.6\}\times\{0.4, 0.6\}$;  right panel: $\Delta x=0.001$ $\mu^{(0)}=\{0.4, 0.45, 0.55, 0.6\}\times\{0.4, 0.6\}$, $c=(0.3, 0.24, 0.15, 0.1, 0.08, 0.06, 0.05, 0.02)$ {\small (in this image some points of the evolution of the centroids are omitted for a better clarity)}.}\label{fig5}
\end{figure}

\paragraph{Test 5.}
We test the centroidal power diagram procedure, Figure  \ref{fig5}, in a simple case given by the unitary square $\Omega=[0,1]^2$
 for $K=6,8$ and capacity constraint given respectively   by
 \begin{align*}
 	&c=(0.3,0.25, 0.18, 0.12, 0.1, 0.05),\\
 	&c=(0.3, 0.24, 0.15, 0.1, 0.08, 0.06, 0.05, 0.02).
 \end{align*}
Clearly we have $\bigcup_k \cS^{(n),k}=\Omega$, for any $n$ and therefore we $\sum_k c_k=|\Omega|=1$. \\

The same technique is used to generate some power diagrams of more complex domains: in Figure  \ref{fig6}  we show the optimal tessellation of a text and a rabbit-shaped domain. In the first case, the algorithm parameters are set to $K=8$, $c=(0.33, 0.22, 0.1, 0.1, 0.1, 0.05, 0.05, 0.05)$, $\Delta x=0.002$. In the second one, $K=6$,  $c=(0.3, 0.15, 0.15, 0.15, 0.15, 0.10)$,  $\Delta x=0.002$.
\begin{figure}[!t]
\begin{center}
\begin{tabular}{cc}
\hspace{-0.5cm}
\includegraphics[width=0.8\textwidth]{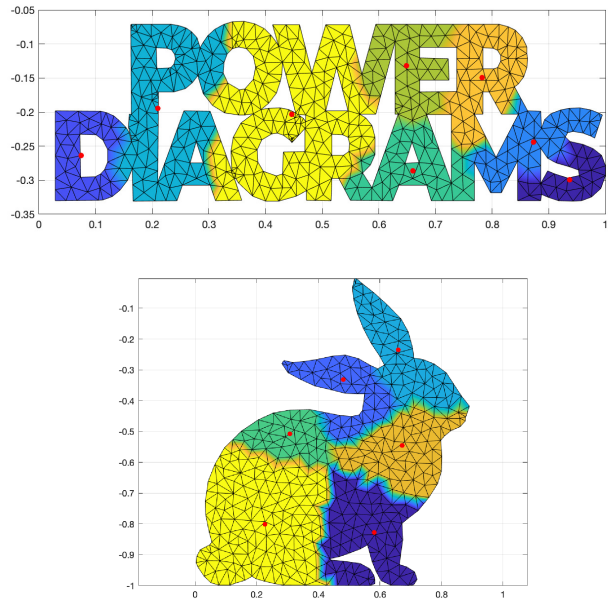}\\
\includegraphics[width=.55\textwidth]{./Figures/rabbit}
\end{tabular}
\end{center}
\caption{Optimal power diagrams of a text and a rabbit shaped domain. The parameters are set $K=8$, $c=(0.33, 0.22, 0.1, 0.1, 0.1, 0.05, 0.05, 0.05)$, $\Delta x=0.002$ (left) $K=6$,  $c=(0.25, 0.15, 0.15, 0.15, 0.15, 0.10)$,  $\Delta x=0.002$ (right).}\label{fig6}
\end{figure}


\end{document}